\theoremstyle{plain}
\newtheorem{thm}{Theorem}[section]
\newtheorem{prop}[thm]{Proposition}
\newtheorem{lem}[thm]{Lemma}
\newtheorem{defn}[thm]{Definition}
\newtheorem{cor}[thm]{Corollary}
\theoremstyle{remark}
\newtheorem{rem}[thm]{Remark}
\newcommand{\N}{\mathbb{N}}
\newcommand{\Q}{\mathbb{Q}}
\newcommand{\R}{\mathbb{R}}
\newcommand{\Z}{\mathbb{Z}}
\newcommand{\C}{\mathbb{C}}
\title{Spherical designs and modular forms of the $D_4$ lattice}
\author[M.~Hirao]{Masatake Hirao}
\address[M.~Hirao]{Department of Information Science and Technology\\
	Aichi Prefectural University\\
	Nagakute-city, Aichi, 480-1198\\
	Japan}
\email{hirao@ist.aichi-pu.ac.jp}
\author[H.~Nozaki]{Hiroshi Nozaki}
\address[H.~Nozaki]{Department of Mathematics Education\\ 
	Aichi University of Education\\
	1 Hirosawa, Igaya-cho, Kariya, Aichi 448-8542\\
	Japan}
\email{hnozaki@auecc.aichi-edu.ac.jp}
\author[K.~Tasaka]{Koji Tasaka}
\address[K.~Tasaka]{Department of Information Science and Technology\\
	Aichi Prefectural University\\
	Nagakute-city, Aichi, 480-1198\\
	Japan}
\email{tasaka@ist.aichi-pu.ac.jp}
\subjclass{05B30, 11P21, 11F30, 11F33}
\keywords{Spherical designs of harmonic index, the $D_4$ root lattice/system, weighted theta functions, non-vanishing and congruences of the Fourier coefficients of cusp forms of level 2}
\begin{document}

\maketitle

\begin{abstract}
In this paper, we study shells of the $D_4$ lattice with a {slight generalization} of spherical $t$-designs due to Delsarte-Goethals-Seidel, namely, the spherical design of harmonic index $T$ (spherical $T$-design for short) introduced by Delsarte-Seidel.
We first observe that{, for any positive integer $m$,} the $2m$-shell of $D_4$ is an antipodal spherical $\{10,4,2\}$-design on the three dimensional sphere.
We then prove that the $2$-shell, which is the $D_4$ root system, is a tight $\{10,4,2\}$-design, using the linear programming method.
The uniqueness of the $D_4$ root system as an antipodal spherical $\{10,4,2\}$-design with 24 points is shown.
We give two applications of the uniqueness: a decomposition of the shells of the $D_4$ lattice in terms of orthogonal transformations of the $D_4$ root system, and the uniqueness of the $D_4$ lattice as an even integral lattice of level 2 in the four dimensional Euclidean space.
We also reveal a connection between the harmonic strength of the shells of the $D_4$ lattice and non-vanishing of the Fourier coefficients of a certain newform of level 2.
Motivated by this, congruence relations for the Fourier coefficients are discussed.
\end{abstract}

\section{Introduction}

For a positive integer $t$, a finite nonempty subset $X$ of the unit sphere $\mathbb{S}^{d-1}$ in the $d$-dimensional Euclidean space $\R^d$ is called a {\itshape spherical $t$-design} if 
\[\frac{1}{|X|} \sum_{\boldsymbol{x}\in X} F(\boldsymbol{x}) = \frac{\int_{\mathbb{S}^{d-1}}F(\xi)d\sigma(\xi)}{\int_{\mathbb{S}^{d-1}}d\sigma(\xi)}\]
holds for any real polynomial $F(\boldsymbol{x})=F(x_1,\ldots,x_d)$ of degree $\le t$.
Here the right-hand side is the usual surface integral over $\mathbb{S}^{d-1}$.
It is convenient to use the equivalent condition that 
$X\subset \mathbb{S}^{d-1}$ is a spherical $t$-design if and only if
\[
\sum_{\boldsymbol{x} \in X} P(\boldsymbol{x}) = 0, \quad \forall P \in {\rm Harm}_\ell (\R^d), \quad \forall \ell \in \{1,2,\ldots,t\},
\]
where ${\rm Harm}_{\ell}(\R^d)$ denotes the $\R$-vector space of real 
homogeneous harmonic polynomials (see Section 2 for the definition) of degree exactly $\ell$ in $d$ variables.

The concept of spherical designs was first introduced by Delsarte-Goethals-Seidel \cite{DelsarteGoethalsSeidel77}.
For any spherical $t$-design $X\subset \mathbb{S}^{d-1}$, they proved a so-called Fisher type bound $|X|\ge b_{d,t}$, where $b_{d,t}= \binom{d+e-1}{e}+\binom{d+e-2}{e-1}$ if $t=2e$ and $b_{d,t}= 2\binom{d+e-1}{e}$ if $t=2e+1$.
If a spherical $t$-design $X\subset \mathbb{S}^{d-1}$ satisfies $|X|=b_{d,t}$, it is said to be {\itshape tight}. 
Since tight spherical $t$-designs have good extremal properties, their classifications have been studied by many people.
For these studies, we refer to \cite{BannaiBannai09} and references therein.

For a positive integer $m$, the $m$-shell of a lattice is the set of lattice points on the sphere with $\sqrt{m}$ radius.
These finite sets have been studied from the design theoretical viewpoint in connection with modular forms, in particular, weighted theta functions.
In this paper, we wish to explicate the shells of the $D_4$ lattice, an even integral lattice in $\R^4$, using a slight generalization of spherical $t$-designs: a spherical design of harmonic index $T$ ({\itshape spherical $T$-design} for short).
Here, for a subset $T$ of $\N$, a non-empty finite subset $X$ of $\mathbb{S}^{d-1}$ is called a spherical $T$-design
if it holds that 
\[
\sum_{\boldsymbol{x} \in X} P(\boldsymbol{x}) = 0, \quad \forall P \in {\rm Harm}_\ell (\R^d), \quad \forall \ell \in T.
\]
This concept was first introduced by Delsarte-Seidel \cite{DS89} as a spherical analogue of the design in association schemes \cite[Section 3.4]{Delsarte:PHD}. 
A prototype of our work is due to Venkov \cite{Venkov84}; one of his results shows that any non-empty (normalized) $2m$-shell of an extremal even unimodular lattice in $\R^{24n} \ (n\ge1)$, including the Leech lattice, is a spherical $\{14,10,8,6,4,2\}$-design.
In his {work}, the theory of modular forms {on} the full modular group plays an important role.
{Since then there have been similar investigations} on several types of lattices (see e.g., \cite{BachocVenkov01,HarpePache05,HarpePacheVenkov06,Pache05}).

The $D_4$ lattice is a root lattice in $\R^4$ generated by all permutations of $(\pm1,\pm1,0,0)$ over $\Z$.
Its $2m$-shell, denoted by $(D_4)_{2m}$, becomes the set of integer solutions to the equation {$x_1^2+x_2^2+x_3^2+x_4^2=2m$}.
{We start by proving} that the normalized set $\frac{1}{\sqrt{2m}}(D_4)_{2m}$ on the unit sphere $\mathbb{S}^3$ is a spherical $\{10,4,2\}$-design for all $m\ge1$ (Proposition \ref{prop:D4 root}).
We indicate two proofs;
the first proof is based on the fact that the Weyl group $W(\mathbf{F}_4)$ of the root system $\mathbf{F}_4$ acts on the $D_4$ lattice, together with the formula for the harmonic Molien series of $W(\mathbf{F}_4)$;
the second proof uses the theory of modular forms of level 2 with weighted theta functions of the $D_4$ lattice.
As a special case, we see that the $D_4$ root system, which is the $2$-shell $(D_4)_2$, is an antipodal spherical $\{10,4,2\}$-design of $\mathbb{S}^3$ with 24 points.
A crucial discovery due to linear programming method is that the lower bound of the cardinality of such design is 24 (Theorem \ref{thm:upper_bound_design}).
{For an antipodal spherical $\{10,4,2\}$-design $X$ in $\mathbb{S}^3$, we say that $X$ is tight if $|X|=24$}. 
Then the $D_4$ root system becomes an example of a tight antipodal spherical $\{10,4,2\}$-design, while it is not a tight spherical $5$-design on $\mathbb{S}^3$ (since $b_{4,5}=20$).

More recently, the study of classification of tight spherical $T$-designs has attracted a lot of attention.
It was started in \cite{BannaiOkudaTagami15} for the case $T=\{t\}$.
The case $t = 4$ was investigated in \cite{OkudaYu16}. 
Zhu et al.~\cite{ZhuBannaiBannaiKimYu17} obtained the classification 
of tight spherical designs of harmonic index $6$ and $8$, 
as well as the asymptotic non-existence of tight spherical $\{2e\}$-designs for $e \geq 3$. 
They also studied the existence problem for tight spherical $T$-designs for some $T$, including the case $T = \{8, 4\}$. 
Our classification problem is based on the fact that the image of a spherical $T$-design under an orthogonal transformation (see Section 4 for the definition) is also a spherical $T$-design.
With this, we prove the uniqueness of the $D_4$ root system (Theorem \ref{thm:uniqueness}).

\begin{thm}\label{thm:classification}
Every antipodal spherical $\{10,4,2\}$-design on $\mathbb{S}^3$ with $24$ points is an orthogonal transformation of the normalized $D_4$ root system $\frac{1}{\sqrt{2}}(D_4)_{2}$.
\end{thm}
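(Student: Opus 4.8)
The plan is to exploit tightness. Since $|X|=24$ attains the lower bound of Theorem~\ref{thm:upper_bound_design}, the linear programming argument proving that bound must be sharp, so the polynomial realizing it vanishes at every inner product occurring between distinct points of $X$. Reading off its roots, and using that antipodality forces the value $-1$ to occur, I would first establish
\[
A:=\{\langle\boldsymbol{x},\boldsymbol{y}\rangle:\boldsymbol{x},\boldsymbol{y}\in X,\ \boldsymbol{x}\neq\boldsymbol{y}\}\subseteq\Bigl\{-1,-\tfrac12,0,\tfrac12\Bigr\},
\]
the same inner product set as the normalized $D_4$ root system.

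Next I would pass to the antipodal quotient, viewing $X$ as $12$ lines with unit representatives $\boldsymbol{u}_1,\dots,\boldsymbol{u}_{12}$, so that the Gram matrix is $G=I+\tfrac12 S$ with $S_{ii}=0$, $S_{ij}\in\{0,\pm1\}$, and $\mathrm{rank}\,G=4$. The degree-$2$ design condition forces the second moment $\sum_{\boldsymbol{x},\boldsymbol{y}\in X}\langle\boldsymbol{x},\boldsymbol{y}\rangle^2$ to equal its design value $|X|^2/4=144$, hence $\sum_{i,j}\langle\boldsymbol{u}_i,\boldsymbol{u}_j\rangle^2=36=\mathrm{tr}(G^2)$; since $G$ has exactly four nonzero eigenvalues summing to $\mathrm{tr}\,G=12$, the equality case of the power-mean inequality makes them all equal to $3$. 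Thus $\sum_i\boldsymbol{u}_i\boldsymbol{u}_i^{\top}=3I$ is a tight frame, $G^2=3G$, and comparing diagonal entries shows each line is orthogonal to exactly $3$ of the others. Feeding in the remaining design conditions ($\ell=4,10$) I would pin down the full set of intersection numbers, identifying the configuration with the association scheme of $(D_4)_2$; in particular the orthogonality graph on the $12$ lines is a disjoint union of $K_4$'s, so $X$ contains an orthonormal $4$-frame.

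Granting a frame $\{\boldsymbol{v}_1,\boldsymbol{v}_2,\boldsymbol{v}_3,\boldsymbol{v}_4\}\subset X$, apply an orthogonal transformation $g$ carrying it to the standard basis $\{\boldsymbol{e}_1,\boldsymbol{e}_2,\boldsymbol{e}_3,\boldsymbol{e}_4\}$. Then $gX\supseteq\{\pm\boldsymbol{e}_1,\dots,\pm\boldsymbol{e}_4\}$, and any remaining $\boldsymbol{v}\in gX$ satisfies $\langle\boldsymbol{v},\boldsymbol{e}_i\rangle\in\{0,\pm\tfrac12\}$ for all $i$; since $\sum_i\langle\boldsymbol{v},\boldsymbol{e}_i\rangle^2=1$ with each summand in $\{0,\tfrac14\}$, all four equal $\pm\tfrac12$ and $\boldsymbol{v}\in\tfrac12\{\pm1\}^4$. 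There are exactly $16$ such vectors and $X$ needs exactly $16$ further points, so
\[
gX=\{\pm\boldsymbol{e}_1,\pm\boldsymbol{e}_2,\pm\boldsymbol{e}_3,\pm\boldsymbol{e}_4\}\cup\tfrac12\{\pm1\}^4,
\]
the vertex set of the regular $24$-cell, which is an orthogonal image of the normalized $D_4$ root system. Hence $X$ is an orthogonal transformation of $\tfrac{1}{\sqrt2}(D_4)_2$. The main obstacle is the passage in the second paragraph from the spectral data $G^2=3G$ to the geometric conclusion that a genuine orthonormal frame occurs, i.e.\ excluding exotic $12$-line tight frames with angles $60^\circ$ and $90^\circ$ whose orthogonality graph is not $3K_4$; once a frame is available the reconstruction above is completely forced.
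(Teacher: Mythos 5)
Your opening and closing steps are correct and essentially match the paper's proof of Theorem \ref{thm:uniqueness}: tightness plus Theorem \ref{thm:upper_bound_design}, applied to a half set via Lemma \ref{lem:half set of antipodal T-design}, does force $A(X)\subseteq\{-1,-\tfrac12,0,\tfrac12\}$, and once an orthonormal basis $\{\boldsymbol{v}_1,\ldots,\boldsymbol{v}_4\}\subset X$ is granted, your reconstruction is airtight: after moving it to $\{\boldsymbol{e}_1,\ldots,\boldsymbol{e}_4\}$, every further point of $gX$ has all coordinates $\pm\tfrac12$, and counting gives $gX=\{\pm\boldsymbol{e}_1,\ldots,\pm\boldsymbol{e}_4\}\cup\tfrac12\{\pm1\}^4$, the $24$-cell, which is indeed congruent to $\tfrac{1}{\sqrt2}\mathbf{D}_4$. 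Your spectral step is also sound: the index-$2$ condition gives $\operatorname{tr}(G^2)=36$ with $\operatorname{tr}(G)=12$ and rank at most $4$, so Cauchy--Schwarz forces $G^2=3G$ and each of the $12$ lines is orthogonal to exactly $3$ others.

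But the step you yourself flag is a genuine gap, and it is the heart of the theorem. Nothing in your argument excludes a $3$-regular orthogonality graph on $12$ vertices other than $3K_4$ (say, one containing a $6$-cycle of lines), and ``feeding in the remaining design conditions ($\ell=4,10$)'' is a hope rather than an argument: those conditions were already spent in determining the distance distribution $A_{-1}(X)=1$, $A_{\pm 1/2}(X)=8$, $A_0(X)=6$, which is pair-counting data and blind to the graph structure; ruling out exotic tight frames requires genuinely local information. This is exactly the hole the paper closes with derived codes. Fix $\boldsymbol{x}\in X$; the $6$ points of $X$ orthogonal to $\boldsymbol{x}$ form (after projecting along $\boldsymbol{x}$, which at $\alpha=0$ leaves inner products unchanged) a $(3,6,s_2,3)$ configuration $X_0\subset\mathbb{S}^2$ with $A(X_0)\subseteq\{-1,-\tfrac12,0,\tfrac12\}$, and the Vandermonde system \eqref{eq:Vandermonde} has the unique solution $A_{-1}(X_0)=1$, $A_{\pm 1/2}(X_0)=0$, $A_0(X_0)=4$: six unit vectors with pairwise inner products in $\{0,-1\}$ are $\pm$ an orthonormal basis. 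That statement is precisely ``the three lines orthogonal to a given line are mutually orthogonal,'' i.e.\ the neighborhood of every vertex of your orthogonality graph is a triangle, whence every component is a $K_4$ and your frame exists. Note that only the $5$-design property (harmonic indices $\{4,2\}$ plus antipodality) and the inner-product set enter here; index $10$ is used only in your first step. If you carry out this local Vandermonde computation (invoking the Bannai--Bannai scheme structure from $t\ge 2s-3$ would also work, but only after computing its parameters, which amounts to the same calculation), your proof is complete and is a mild, slightly shorter repackaging of the paper's: the paper goes on to identify $X_{\pm 1/2}$ with cubes and reassembles the whole configuration, whereas you finish directly from the frame.
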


It is worth pointing out that the normalized $D_4$ root system is the first example such that it is not unique as a spherical $t$-design, but unique as an antipodal spherical $T$-design (see also Remark \ref{rem:unique}).
Theorem \ref{thm:classification} not only contributes to the study of classification of spherical designs, but also has two striking applications: a decomposition of the normzalized shells of the $D_4$ lattice in terms of the disjoint union of orthogonal transformations of the normalized $D_4$ root system (Theorem  \ref{thm:D_4-decomp}), and the uniqueness of the $D_4$ lattice as an even integral lattice of level 2 in $\R^4$ (Theorem \ref{thm:uniqueness_lattice}).

In connection with modular forms, given a finite subset $X\subset \mathbb{S}^{d-1}$, we deal with the problem of determining the maximal subset $T\subset\N$, called the {\itshape harmonic strength} of $X$, such that $X$ is a spherical $T$-design.
This problem is intimately related to the non-vanishing problem of the Fourier coefficients of cusp forms.
For the shells of the $D_4$ lattice, we obtaine the following result (Theorem \ref{thm:lehmer-type}).
\begin{thm}\label{thm:non-vanishing}
For $m\in \N$, the harmonic strength of $\frac{1}{\sqrt{2m}}(D_4)_{2m}$ contains $6$ if and only if the $m$th Fourier coefficient $\tau_2(m)$ of the unique normalized cusp form $\sum_{m\ge1} \tau_2(m)q^m:=\eta(z)^8\eta(2z)^8 \ (q:=e^{2\pi i z})$ of weight $8$ of level $2$ is zero, where $\eta(z):=q^{1/24}\prod_{n\ge1}(1-q^n)$ is the Dedekind eta function.
\end{thm}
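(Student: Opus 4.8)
The plan is to translate the defining vanishing condition of the harmonic strength into a statement about a single weighted theta function, and then to exploit the one-dimensionality of the relevant space of cusp forms. For each $m$, consider the linear functional $L_m\colon \mathrm{Harm}_6(\R^4)\to\R$ given by $L_m(P)=\sum_{\boldsymbol{x}\in (D_4)_{2m}}P(\boldsymbol{x})$; by definition $6$ lies in the harmonic strength of $\frac{1}{\sqrt{2m}}(D_4)_{2m}$ if and only if $L_m\equiv 0$. The first observation is that the automorphism group $\mathrm{Aut}(D_4)=W(\mathbf{F}_4)$ permutes the shell $(D_4)_{2m}$, so $L_m$ is $W(\mathbf{F}_4)$-invariant; averaging over the group shows that an invariant functional vanishes on every non-trivial isotypic component, hence $L_m$ factors through the space $\mathrm{Harm}_6(\R^4)^{W(\mathbf{F}_4)}$ of invariant harmonic polynomials. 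By the harmonic Molien series of $W(\mathbf{F}_4)$ already used in the proof of Proposition \ref{prop:D4 root} (with degrees $2,6,8,12$, this series is $\frac{1}{(1-t^6)(1-t^8)(1-t^{12})}=1+t^6+t^8+\cdots$), the invariant space in degree $6$ is one-dimensional; fix a nonzero generator $P_6$. Thus $6$ lies in the harmonic strength of the $2m$-shell if and only if $L_m(P_6)=0$.

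I would then assemble these values into the weighted theta function $\Theta_{D_4,P_6}(z)=\sum_{\boldsymbol{x}\in D_4}P_6(\boldsymbol{x})\,q^{\langle \boldsymbol{x},\boldsymbol{x}\rangle/2}=\sum_{m\ge1}L_m(P_6)\,q^m$, the term $m=0$ vanishing since $P_6(\boldsymbol{0})=0$. By the theory of weighted theta functions of the $D_4$ lattice invoked in the second proof of Proposition \ref{prop:D4 root}, $\Theta_{D_4,P_6}$ is a cusp form of weight $\tfrac{4}{2}+6=8$ on $\Gamma_0(2)$ (the accompanying character is trivial because $\det D_4=4$ is a square). Since $S_8(\mathrm{SL}_2(\Z))=0$ there are no oldforms, and $\dim S_8(\Gamma_0(2))=1$, consistent with the uniqueness of the normalized cusp form asserted in the statement; thus $S_8(\Gamma_0(2))=\C f$ with $f=\eta(z)^8\eta(2z)^8=\sum_{m\ge1}\tau_2(m)q^m$. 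Consequently $\Theta_{D_4,P_6}=c\,f$ for some constant $c$, so that $L_m(P_6)=c\,\tau_2(m)$ for all $m\ge1$.

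To finish I must check that $c\ne0$, and this is the crux. Comparing the first Fourier coefficients and using $\tau_2(1)=1$ gives $c=L_1(P_6)=\sum_{\boldsymbol{x}\in(D_4)_2}P_6(\boldsymbol{x})$, a finite sum over the $24$ roots $(\pm1,\pm1,0,0)$ and their coordinate permutations. I would make $P_6$ explicit, for instance as the harmonic projection of the degree-$6$ basic invariant of $W(\mathbf{F}_4)$, and evaluate this sum directly to confirm it is nonzero. Granting $c\ne0$, the identity $L_m(P_6)=c\,\tau_2(m)$ shows $L_m(P_6)=0$ if and only if $\tau_2(m)=0$, which, together with the reduction of the first paragraph, yields exactly that $6$ lies in the harmonic strength of $\frac{1}{\sqrt{2m}}(D_4)_{2m}$ if and only if $\tau_2(m)=0$. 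The main obstacle is precisely this nonvanishing: were $c$ to vanish, the whole weighted theta function would be identically zero and $6$ would belong to the harmonic strength of every shell, so showing that the invariant degree-$6$ harmonic does not sum to zero over the root system is what makes the equivalence nontrivial; the remainder is the formal machinery of invariant theory and the one-dimensionality of $S_8(\Gamma_0(2))$.
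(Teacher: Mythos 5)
Your proposal is correct and follows essentially the same route as the paper: the paper likewise uses the $W(\mathbf{F}_4)$-invariance of the theta map together with the harmonic Molien series to reduce the question to the single invariant harmonic polynomial $P_6\in{\rm Harm}_6(\R^4)^{W(\mathbf{F}_4)}$, and then identifies $\theta_{D_4,P_6}$ inside the one-dimensional space of weight-$8$ level-$2$ cusp forms. The one step you leave unexecuted, the nonvanishing of the constant $c$, is exactly the finite check the paper performs with the explicit $P_6$ of \eqref{eq:pol_6}, yielding $\theta_{D_4,P_6}(z)=-192\,\eta(z)^8\eta(2z)^8$, i.e.\ $c=-192\neq0$ (each of the $24$ roots contributes $P_6(\boldsymbol{x})=-8$).
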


This is analogues to the study of de la Harpe, Pache and Venkov \cite{HarpePache05,HarpePacheVenkov06}; as a prototype, it was known to Venkov for many years and stated in \cite[Proposition B]{Pache05} (see also \cite[Section 3.2]{BannaiBannai09}) that the Ramanujan $\tau$-function $\tau(m)$, defined by $\sum_{m\ge1} \tau(m) q^m :=\eta(z)^{24}$, vanishes if and only if the $2m$-shell of the $E_8$ lattice is a spherical $8$-design.
Note that $\tau(m)$ is believed to be non-zero for all $m\in \N$, according to Lehmer's conjecture \cite{Lehmer47}.
In our case, we may believe that $\tau_2(m)$ would never be 0 (similar to Lehmer's conjecture).
Along these lines, we prove congruence relations $\tau_2(p)\equiv p(p+1)\bmod \ell$ for $\ell \in\{3,5\}$ (Theorem \ref{thm:tau2_congruence}) which shows $\tau_2(p)\neq0$ for all prime $p\not\equiv-1\bmod 15$ (Corollary \ref{cor:non-vanishing}).
This congruence might not be new and can be deduced from results in the literature, e.g., \cite{BillereyMenares,DummiganFretwell,GabaPopa,KumarKumariMoreeSingh,Nikdelan}, but our proof may shed new light on this study.

{The organization of this paper is as follows. 
In Section 2, some basic materials, including the definitions of spherical designs and codes, are prepared.
In Section 3, using the linear programming method, we prove bounds for the cardinality of $(4,N,1/2)$ spherical codes and spherical $\{10,4,2\}$-designs on $\mathbb{S}^3 \ (d=4)$.
In Section 4, we recall some basic techniques from the theory of spherical designs and apply it to the shells of the $D_4$ lattice.
Section 5 is devoted to proving Theorem \ref{thm:classification}, the uniqueness of the $D_4$ root system as antipodal spherical $\{10,4,2\}$-designs on $\mathbb{S}^3$ with 24 points.
Section 6 and Section 7 discuss applications of the uniqueness theorem to the orthogonal decomposition of the shells of the $D_4$ lattice and to the uniqueness of the $D_4$ lattice as an even integral lattice of level 2 in $\R^4$, respectively. 
In Section 8, we prove Theorem \ref{thm:non-vanishing}.
}

\section{Spherical code and design}

The concepts of spherical codes and spherical designs introduced by Delsarte-Goethals-Seidel \cite{DelsarteGoethalsSeidel77} apply for finite subsets of the unit sphere ${\mathbb S}^{d-1} := \{ \boldsymbol{x} = (x_1, \ldots, x_d) \in \R^{d} \mid \langle \boldsymbol{x} ,\boldsymbol{x}\rangle = 1 \}$ in the $d$-dimensional Euclidean space $\R^d$, where {$ \langle \boldsymbol{x}, \boldsymbol{y} \rangle := \sum_{i = 1}^{d} x_iy_i$ for $\boldsymbol{x},\boldsymbol{y}\in \R^d$}.
We recall their definitions, thereby also fixing some of our notation.

For a subset $X$ of $\mathbb{S}^{d-1}$, let us denote the set of inner products of two distinct points in $X$ by
\[A(X){:=}\{\langle \boldsymbol{x}, \boldsymbol{y}\rangle \mid  \boldsymbol{x}, \boldsymbol{y}\in X,\  \boldsymbol{x}\neq \boldsymbol{y}\} \subset [-1,1).\]
We denote by ${\rm Harm}_{\ell}(\R^d)$ the $\R$-vector space of real 
homogeneous harmonic polynomials of degree exactly $\ell$ in $d$ variables, namely, {polynomials} in $\R[x_1,\ldots,x_d]$ of homogeneous degree $\ell$ annihilated by the Laplacian operator 
\[{\Delta_d:=\sum_{j=1}^d \frac{\partial^2}{\partial x_j^2}}.\]
{It is well known (see Theorem 3.2 in \cite{DelsarteGoethalsSeidel77}) that
\[ \dim {\rm Harm}_\ell(\R^d)= \binom{d+\ell -1}{\ell} - \binom{d+\ell-3}{\ell -2}.\]
}

\begin{defn}\label{def:spherical design}
1) A set $X$ of $N$ points on $\mathbb{S}^{d-1}$ is called a $(d,N,a)$ spherical code if every element in $ A(X)$ is less than or equal to $a\in \R$.\\
2) Let $T$ be a subset of $\N$.
A non-empty finite subset $X$ of $\mathbb{S}^{d-1}$ is called a spherical design of harmonic index $T$ (spherical $T$-design for short)
if it holds that 
\[
\sum_{\boldsymbol{x} \in X} P(\boldsymbol{x}) = 0, \quad \forall P \in {\rm Harm}_\ell (\R^d), \quad \forall \ell \in T.
\]
\end{defn}

For $t\in \N$, a spherical $\{t,t-1,\ldots,2,1\}$-design is a spherical $t$-design as mentioned in Introduction (see also \cite{DelsarteGoethalsSeidel77} for the original definition).
The spherical $T$-design, which is a generalization of spherical $t$-designs, {was} first introduced by Delsarte-Seidel \cite{DS89} and its classification has recently been studied by Bannai-Okuda-Tagami \cite{BannaiOkudaTagami15}.

For a subset $X$ of $\R^d$ and a scalar $c\in \R$, we write $cX:=\{c\boldsymbol{x}\in \R^d\mid \boldsymbol{x}\in X\}$. 
A set $X$ is said to be {\itshape antipodal} if we have $-X=X$. 
For an antipodal subset $X$ of $\R^d$, a subset $X'\subset X$ is called {\itshape a half set of $X$} if $X$ is a disjoint union of $X'$ and $-X'$; $X'\sqcup (-X')=X$.
For any antipodal {subset $X$ of $\mathbb{S}^{d-1}$ (note that $\boldsymbol{0}\not\in X$)}, a half set of $X$ always exists, but not unique.

\begin{lem}\label{lem:half set of antipodal T-design}
Let $X'$ be a half set of an antipodal subset $X\subset \mathbb{S}^{d-1}$.
If $X'$ is a spherical $T$-design, then $X$ is an antipodal spherical $T$-design.
On the other hand, if $X$ is an antipodal spherical $T$-design, then $X'$ is a spherical $T'$-design with $T'=\{2\ell \in 2\N\mid 2\ell\in T\}$, where $2\N$ is the set of positive even integers.
\end{lem}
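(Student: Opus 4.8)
The plan is to reduce everything to a single parity identity. The only structural fact I need is that a homogeneous polynomial $P$ of degree $\ell$ satisfies $P(-\boldsymbol{x})=(-1)^\ell P(\boldsymbol{x})$; in particular a harmonic polynomial of odd degree is an odd function and one of even degree is even. First I would fix an integer $\ell$ and an arbitrary $P\in{\rm Harm}_\ell(\R^d)$, and use the defining decomposition $X=X'\sqcup(-X')$ to split the summation over $X$. Since $\boldsymbol{x}\mapsto -\boldsymbol{x}$ is a bijection from $X'$ onto $-X'$, the homogeneity identity gives the single key formula
\[
\sum_{\boldsymbol{x}\in X} P(\boldsymbol{x}) = \sum_{\boldsymbol{x}\in X'} P(\boldsymbol{x}) + \sum_{\boldsymbol{x}\in X'} P(-\boldsymbol{x}) = \bigl(1+(-1)^\ell\bigr)\sum_{\boldsymbol{x}\in X'} P(\boldsymbol{x}).
\]
Both assertions of the lemma will follow by reading this identity in the two directions, paying attention to the factor $1+(-1)^\ell$, which is $2$ when $\ell$ is even and $0$ when $\ell$ is odd.

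For the forward direction, I would first note that $X=X'\sqcup(-X')$ is antipodal by construction, since $-X=(-X')\sqcup X'=X$. Assuming $X'$ is a spherical $T$-design, the right-hand side of the displayed identity vanishes for every $\ell\in T$ and every $P\in{\rm Harm}_\ell(\R^d)$, hence so does the left-hand side; thus $X$ is an antipodal spherical $T$-design, proving the first statement.

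For the converse, I would suppose $X$ is an antipodal spherical $T$-design and again invoke the identity. When $\ell\in T$ is even, the factor $1+(-1)^\ell=2$ is nonzero, so $\sum_{\boldsymbol{x}\in X}P(\boldsymbol{x})=0$ forces $\sum_{\boldsymbol{x}\in X'}P(\boldsymbol{x})=0$; hence $X'$ inherits the design property for precisely the even degrees in $T$, which is exactly $T'$. The point that requires care—though it is conceptual rather than computational—is why $T'$ cannot be taken larger: for odd $\ell$ the factor $1+(-1)^\ell$ vanishes, so the antipodal sum over $X$ is automatically zero regardless of $X'$, and the design condition on $X$ carries no information about the half set $X'$ in odd degrees. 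This is the only subtlety in the proof, and it is what pins down $T'=\{2\ell\in 2\N \mid 2\ell\in T\}$ rather than all of $T$.
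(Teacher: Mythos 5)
Your proposal is correct and is essentially identical to the paper's proof: both rest on the parity identity $\sum_{\boldsymbol{x}\in X}P(\boldsymbol{x})=\bigl(1+(-1)^\ell\bigr)\sum_{\boldsymbol{x}\in X'}P(\boldsymbol{x})$ obtained by splitting the sum over $X=X'\sqcup(-X')$ and using homogeneity. Your closing remark about why $T'$ cannot be enlarged to include odd degrees is a harmless extra observation, as the lemma does not assert maximality of $T'$.
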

\begin{proof}
Suppose that $X'$ is a spherical $T$-design.
Then, for $\ell\in T$ and $P\in {\rm Harm}_\ell(\R^d)$, one has
\[\sum_{\boldsymbol{x}\in X}P(\boldsymbol{x}) = \sum_{\boldsymbol{x}\in X'}P(\boldsymbol{x})+\sum_{\boldsymbol{x}\in -X'}P(\boldsymbol{x})=(1+(-1)^\ell)\sum_{\boldsymbol{x}\in X'}P(\boldsymbol{x})=0. \]
Hence, $X$ is an antipodal spherical $T$-design.
Now suppose that $X$ is an antipodal spherical $T$-design.
Then, for $\ell\in T$ even and $P\in {\rm Harm}_\ell(\R^d)$, we have $0=\sum_{\boldsymbol{x}\in X}P(\boldsymbol{x})=2\sum_{\boldsymbol{x}\in X'}P(\boldsymbol{x}) $, and hence, $X'$ is a spherical $T'$-design.
We complete the proof.
\end{proof}

We also notice that if $X$ is an antipodal spherical $T$-design, then $T$ contains all positive odd integers.
Since in this paper we only consider antipodal spherical $T$-designs,  we omit to write positive odd integers lying in $T$.

\section{Linear programming bounds}
{The principle problem in the theory of spherical codes (resp.~a spherical design of harmonic index) is, for a fixed $d$ and $a$, to find a $(d,N,a)$ spherical code with maximum possible $N$ (resp.~for a fixed $d$ and $T$, to find a spherical $T$-design with minimum possible $N$).}
The linear programming method, established by Delsarte-Goethals-Seidel \cite{DelsarteGoethalsSeidel77}, is a useful tool to provide upper (resp.~lower) bounds on the cardinality of a spherical code (resp.~design).
In this section, we describe and apply it to our cases: a spherical $\{10,4,2\}$-design on $\mathbb{S}^3$ and a $(4,N,1/2)$ spherical code.

{For $d\ge3$}, let $Q_{\ell} (x) := Q_{d, \ell} (x) = \frac{d + 2 \ell - 2}{d - 2} C_{\ell}^{((d-2)/2)}(x)$ 
be the (scaled) Gegenbauer polynomial of degree $\ell$ in one variable $x$ as introduced in \cite[Definition 2.1]{DelsarteGoethalsSeidel77} {(later we only consider the case $d=4$)}.
{It is also defined by the recurrence relation
\[ \lambda_{\ell+1}Q_{\ell+1}(x)=xQ_\ell(x)-(1-\lambda_{\ell-1})Q_{\ell-1}(x)\]
with the initial values $Q_0(x)=1,\ Q_1(x)=dx$, where $\lambda_\ell=\ell/(d+2\ell -2)$.
It holds that $\dim {\rm Harm}_\ell(\R^d)=Q_\ell(1)$.
The Gegenbauer polynomials $Q_\ell(x)$ are the orthogonal polynomials on the closed interval $[-1, 1]$ with respect to the inner product of the weight function $(1 - x^2)^{(d -3)/2}$, i.e.,
\[
\int_{-1}^{1} Q_{k}(x) Q_{\ell} (x) (1 - x^2)^{(d-3)/2} \; dx = b_{ \ell}
\delta_{k, \ell}
\]
where $b_{\ell}$ is some (normalization) constant depending on $d$ and $\ell$, and $\delta_{k,\ell}$ is the Kronecker delta.
To each real polynomial $F$ of degree $r$ we can associate its \textit{Gegenbauer expansion}
\begin{equation}
\label{eq:F}
F(x) = \sum_{\ell = 0}^{r} f_\ell Q_{\ell} (x),
\end{equation}
where the \textit{Gegenbauer coefficients} $f_\ell$ can be calculated as follows:
\[
f_\ell = \frac{1}{b_{\ell}} \int_{-1}^{1} F(x) Q_{ \ell} (x) (1 - x^2)^{(d - 3)/2} \; dx.
\]}

Let $\{\varphi_{\ell,i}\}_{i=1}^{N_\ell}$ be an orthonormal basis of ${\rm Harm}_{\ell} (\mathbb{S}^{d-1})$ which is the restriction of ${\rm Harm}_\ell(\R^d)$ to $\mathbb{S}^{d-1}$, 
where $N_{\ell} :=N_{d,\ell}= \dim {\rm Harm}_{\ell} (\mathbb{S}^{d-1}) = Q_\ell(1)$.
For a finite subset $X$ of $\mathbb{S}^{d-1}$, we write 
\[ H_{\ell} :=H_{\ell}(X) = \big( \varphi_{\ell,i}(\xi) \big)_{\begin{subarray}{c} \xi\in X\\ 1\le i\le N_\ell\end{subarray}}\]
for the $|X|\times N_\ell$ matrix whose rows and columns are indexed by $\xi\in X$ and $1\le i\le N_\ell$, respectively.
$H_0$ is of size $|X|\times 1$ whose entries are all 1.
For $\ell\ge1$, one has ${}^tH_{\ell} H_0 = \big( \sum_{\xi\in X}\varphi_{\ell,i}(\xi)\big)_{1\le i\le N_{\ell}}$.
From this, we see that $X$ is a spherical $T$-design if and only if $\|{}^tH_\ell H_0\|=0$ holds for all $\ell \in T$, where for a real matrix $M=(a_{ij})$, we write $\| M\| :=\sum_{i,j} a_{ij}^2$.

A key lemma for the linear programming method is as follows (cf.~\cite[Corollary 3.8]{DelsarteGoethalsSeidel77}).

\begin{lem}\label{lem:fisher}
Let $X\subset\mathbb{S}^{d-1}$ be a finite subset.
For a real polynomial $F(x)\in \R[x]$ with the Gegenbauer expansion \eqref{eq:F}, we have
\[f_0 |X|^2+\sum_{\ell=1}^rf_\ell \| {}^t H_{\ell}H_0\| = F(1) |X| + \sum_{\alpha\in A(X)} F(\alpha)d_\alpha.\]
where $d_\alpha:=\sharp \{ (\xi,\eta)\in X\times X\mid \langle \xi,\eta\rangle =\alpha\}$.
\end{lem}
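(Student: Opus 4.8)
The plan is to compute the single double sum
\[
S:=\sum_{\xi,\eta\in X}F(\langle\xi,\eta\rangle)
\]
over all ordered pairs $(\xi,\eta)\in X\times X$ in two different ways; the two resulting expressions turn out to be precisely the right-hand and left-hand sides of the claimed identity.

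First I would recover the right-hand side. Any two \emph{distinct} points $\xi,\eta\in\mathbb{S}^{d-1}$ satisfy $\langle\xi,\eta\rangle<1$ (so $1\notin A(X)$), while the diagonal pairs $\xi=\eta$ all have $\langle\xi,\eta\rangle=1$. Splitting $X\times X$ into its diagonal and off-diagonal parts, and grouping the off-diagonal pairs according to the common value $\alpha=\langle\xi,\eta\rangle\in A(X)$ (counted by $d_\alpha$), gives
\[
S = F(1)\,|X| + \sum_{\alpha\in A(X)}F(\alpha)\,d_\alpha.
\]

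For the left-hand side I would substitute the Gegenbauer expansion \eqref{eq:F}, so that $S=\sum_{\ell=0}^r f_\ell\sum_{\xi,\eta\in X}Q_\ell(\langle\xi,\eta\rangle)$. The term $\ell=0$ contributes $f_0|X|^2$ since $Q_0\equiv1$. For $\ell\ge1$ the essential tool is the \emph{addition formula} for the Gegenbauer polynomials (see \cite{DelsarteGoethalsSeidel77}), namely
\[
Q_\ell(\langle\xi,\eta\rangle)=\sum_{i=1}^{N_\ell}\varphi_{\ell,i}(\xi)\,\varphi_{\ell,i}(\eta),
\]
which is exactly the normalization making $Q_\ell(1)=N_\ell$. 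Inserting this and interchanging the (finite) orders of summation factorizes the double sum:
\[
\sum_{\xi,\eta\in X}Q_\ell(\langle\xi,\eta\rangle)
=\sum_{i=1}^{N_\ell}\Bigl(\sum_{\xi\in X}\varphi_{\ell,i}(\xi)\Bigr)\Bigl(\sum_{\eta\in X}\varphi_{\ell,i}(\eta)\Bigr)
=\sum_{i=1}^{N_\ell}\Bigl(\sum_{\xi\in X}\varphi_{\ell,i}(\xi)\Bigr)^{2}.
\]
By the description of ${}^tH_\ell H_0$ recorded just before the lemma, the rightmost expression is exactly $\|{}^tH_\ell H_0\|$. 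Summing over $\ell$ then gives $S=f_0|X|^2+\sum_{\ell=1}^r f_\ell\,\|{}^tH_\ell H_0\|$, and equating the two computations of $S$ proves the identity.

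The only nontrivial ingredient is the addition formula; everything else is bookkeeping, i.e.\ the diagonal/off-diagonal split and the interchange of finite sums. Accordingly I expect no real obstacle beyond invoking this formula with the correct normalization, and checking that the orthonormality convention adopted for $\{\varphi_{\ell,i}\}$ matches the one under which the addition formula (equivalently, the identity $Q_\ell(1)=N_\ell$) is stated.
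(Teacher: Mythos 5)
Your proposal is correct and is essentially the paper's own argument: both hinge on the addition formula $Q_\ell(\langle\xi,\eta\rangle)=\sum_{i=1}^{N_\ell}\varphi_{\ell,i}(\xi)\varphi_{\ell,i}(\eta)$ to identify $\sum_{\xi,\eta\in X}Q_\ell(\langle\xi,\eta\rangle)$ with $\|{}^tH_\ell H_0\|$, followed by linearity and the diagonal/off-diagonal bookkeeping $d_1=|X|$, $\|{}^tH_0H_0\|=|X|^2$. The only difference is presentational (you compute the double sum $S$ in two ways, while the paper computes $\|{}^tH_\ell H_0\|$ first and then sums over $\ell$), so there is nothing to flag.
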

\begin{proof}
{We use the additive formula given in Theorem 3.3 of \cite{DelsarteGoethalsSeidel77}.}
For any $\xi,\eta\in \mathbb{S}^{d-1}$ we have
\begin{equation*}\label{eq:additive} 
\sum_{i=1}^{N_{\ell}} \varphi_{\ell,i}(\xi) \varphi_{\ell,i}(\eta) = Q_\ell(\langle\xi,\eta\rangle).
\end{equation*}
Using this, one computes
\begin{align*}
 \| {}^t H_\ell H_0\| &= \sum_{1\le i\le N_{\ell}}\bigg( \sum_{\xi\in X}\varphi_{\ell,i}(\xi)\bigg)^2=\sum_{\xi,\eta\in X} Q_{\ell} (\langle \xi,\eta\rangle)=\sum_{\alpha\in A(X)\cup\{1\}} Q_{\ell}(\alpha)d_\alpha.
\end{align*}
By linearity, it holds that
\[ \sum_{\ell=0}^rf_\ell \| {}^t H_\ell H_0 \| = \sum_{\alpha\in A(X)\cup\{1\}} F(\alpha)d_\alpha.\]
Now the desired result follows from $ \| {}^t H_0 H_0\| =|X|^2$ and $d_1=|X|$.
\end{proof}

{Now we use Lemma \ref{lem:fisher} to obtain the lower bound for} the cardinality of a spherical $\{10,4,2\}$-design on $\mathbb{S}^3$.
{Hereafter, we set $d$ to be 4.}

\begin{thm}\label{thm:upper_bound_design}
Let $X$ be a spherical $\{10,4,2\}$-design on $\mathbb{S}^3$.
Then, {we have that} $|X|\ge 12$.
Moreover, $X$ attains the lower bound if and only if $X$ is a $(4,12,1/2)$ spherical code with $A(X)\subset \{-1/2,0,1/2\}$.
\end{thm}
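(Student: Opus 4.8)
The plan is to apply the linear programming bound of Lemma~\ref{lem:fisher} with a carefully chosen test polynomial. Throughout $d=4$. Since $X$ is a spherical $\{10,4,2\}$-design, the design condition gives $\|{}^tH_\ell H_0\|=0$ for $\ell\in\{2,4,10\}$, while $\|{}^tH_\ell H_0\|\ge0$ for every $\ell$. The key idea is to look for a polynomial $F$ whose Gegenbauer expansion $F=\sum_\ell f_\ell Q_\ell$ is supported only on the indices $\ell\in\{0,2,4,10\}$, i.e. $f_\ell=0$ for all $\ell\ge1$ with $\ell\notin\{2,4,10\}$, together with $f_0>0$ and $F(x)\ge0$ on $[-1,1]$. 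For such an $F$, Lemma~\ref{lem:fisher} collapses to $f_0|X|^2=F(1)|X|+\sum_{\alpha\in A(X)}F(\alpha)d_\alpha$, and since $F(\alpha)\ge0$ and $d_\alpha>0$ this yields $|X|\ge F(1)/f_0$.

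Next I would construct $F$ explicitly. Restricting to the space spanned by the even polynomials $Q_0,Q_2,Q_4,Q_{10}$, I impose that $F$ have double zeros at the candidate inner products $0$ and $\pm1/2$; concretely $F(0)=0$, $F(1/2)=0$, $F'(1/2)=0$ (the zero at $0$ and the behavior at $-1/2$ being automatic by evenness). Using the values $Q_2(1/2)=0$, $Q_4(1/2)=-5$, $Q_{10}(1/2)=-11$, $Q_2(0)=-3$, $Q_4(0)=5$, $Q_{10}(0)=-11$ and the corresponding derivatives, these three linear conditions pin down $(f_0,f_2,f_4,f_{10})$ up to scale; normalizing $f_{10}=15$ gives $(f_0,f_2,f_4,f_{10})=(495,220,66,15)$. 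I would then record the two facts needed: first, using $Q_\ell(1)=(\ell+1)^2$, $F(1)=f_0+9f_2+25f_4+121f_{10}=5940$, so that $F(1)/f_0=12$; and second, rewriting $F$ in $u=x^2$ and factoring,
\[
F(x)=660\,x^2(4x^2-1)^2(16x^4-28x^2+13),
\]
where the last factor, as a quadratic in $u=x^2$, has negative discriminant $28^2-4\cdot16\cdot13=-48$ and is hence strictly positive. This proves $F\ge0$ on all of $\R$, with zeros in $[-1,1)$ exactly at $\{-1/2,0,1/2\}$.

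With $F$ in hand, the bound $|X|\ge F(1)/f_0=12$ is immediate from the displayed identity. For the equality case, $|X|=12$ forces $\sum_{\alpha\in A(X)}F(\alpha)d_\alpha=0$; as $F(\alpha)\ge0$ and $d_\alpha>0$ for each $\alpha\in A(X)$, this is equivalent to $F$ vanishing on $A(X)$, i.e. $A(X)\subseteq\{-1/2,0,1/2\}$. In that case $\max A(X)\le1/2$, so $X$ is a $(4,12,1/2)$ spherical code; conversely any $(4,12,1/2)$ code with $A(X)\subseteq\{-1/2,0,1/2\}$ already has $12$ points and hence attains the bound. Note that restricting the support to $\{0,2,4,10\}$ makes $\sum_{\ell\ge1,\ell\notin T}f_\ell\|{}^tH_\ell H_0\|$ vanish identically, so no condition on the strengths at $\ell=6,8$ enters the equality analysis; this is precisely what makes the characterization clean.

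The main obstacle is the construction of $F$: the decisive, non-obvious step is recognizing that the Gegenbauer support can be restricted to exactly $\{0,2,4,10\}$ — so the undesired terms vanish outright rather than merely being sign-controlled — and that imposing double roots at $0,\pm1/2$ then determines $F$ and delivers precisely the ratio $F(1)/f_0=12$. Once $F$ is guessed (as the dual solution of the underlying linear program), everything else is routine: evaluating a handful of Gegenbauer values and factoring the resulting quartic to confirm nonnegativity.
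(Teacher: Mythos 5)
Your proof is correct and is essentially the paper's own argument: your test polynomial $F(x)=660\,x^2(4x^2-1)^2(16x^4-28x^2+13)$ is exactly the paper's $F_T$ (with Gegenbauer coefficients $\frac{1}{11264}Q_{10}+\frac{1}{2560}Q_4+\frac{1}{768}Q_2+\frac{3}{1024}$) rescaled by $168960$, applied through the same Lemma~\ref{lem:fisher} identity with the same equality analysis via the zero set $\{-1/2,0,1/2\}$. The only difference is presentational: you derive the polynomial from the double-root conditions at $0,\pm\tfrac12$, while the paper simply exhibits it.
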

\begin{proof}
Consider the real polynomial
\begin{equation}\label{eq:F_T}
\begin{aligned}
F_T(x) &:= \frac{1}{11264} Q_{10}(x) + \frac{1}{2560} Q_{4} (x) + 
\frac{1}{768} Q_{2} (x) + \frac{3}{1024} \\
&= \frac{1}{16} x^2 (x + \frac{1}{2} )^2  (x - \frac{1}{2} )^2 (16 x^4 - 28 x^2 + 13).
\end{aligned}
\end{equation}
We write $F_T(x)=\sum_{\ell=0}^{10} f_\ell Q_\ell(x)$.
One can easily check the inequality $F_T(x)\ge0$ for all $x\in [-1,1)$.
{Since $\| {}^t H_\ell H_0 \|=0$ for $\ell\in T$}, by Lemma \ref{lem:fisher}, we get the inequality
\begin{equation}\label{eq:ineq_F_T}
f_0|X|^2-F_T(1)|X| = \sum_{\alpha\in A(X)} F_T(\alpha)d_\alpha\ge0.
\end{equation}
Since $F_T(1)=\frac{9}{256}$, the desired inequality $|X| \geq F_T(1)/f_0=12$ follows.
The equality holds if $F_T(\alpha)=0\ (\forall \alpha\in A(X))$.
We complete the proof, because $\{\alpha \in \R \mid F_T(\alpha)=0\} = \{-1/2,0,1/2\}$.
\end{proof}


An antipodal spherical $\{10,4,2\}$-design $X\subset \mathbb{S}^3$ is said to be {\itshape tight} when $|X|=24$.
From Lemma \ref{lem:half set of antipodal T-design} and Theorem \ref{thm:upper_bound_design}, the existence of a tight antipodal spherical $\{10,4,2\}$-design $X\subset\mathbb{S}^3$ is equivalent to that of a spherical $\{10,4,2\}$-design $Y\subset\mathbb{S}^3$ with 12 points as the correspondence $X=Y\cup (-Y)$. 
Our tight design means a `minimal' antipodal design proved by the linear programming bound obtained from the test function \eqref{eq:F_T}, and
it is different from the classical definition of tight spherical $t$-designs given in Introduction. 
Other definitions for tight spherical $T$-designs can be found in \cite[Definition 6.4]{BBTZ17} and \cite{BannaiOkudaTagami15}, where the existence and non-existence of tight spherical $T$-designs are studied.
Several investigations have been conducted in this direction; see e.g., \cite{OkudaYu16,ZhuBannaiBannaiKimYu17}.

Theorem \ref{thm:upper_bound_design} says that every half set of a tight antipodal spherical $\{10,4,2\}$-design on $\mathbb{S}^3$ is a $(4,12,1/2)$ spherical code. 
The natural question to ask is the upper bound of $N$ for a $(4,N,1/2)$ spherical code.

\begin{thm}\label{thm:lower_bound_code}
Let $X$ be a $(4,N,1/2)$ spherical code with $A(X)\subset [-1/2,1/2]$.
Then we have that $N\le12$.
Furthermore, $X$ attains the upper bound if and only if $X$ is a spherical $\{10,4,2\}$-design and $A(X) \subset\{-1/2,0,1/2\}$. 
\end{thm}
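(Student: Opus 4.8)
The plan is to run the linear programming method of Lemma~\ref{lem:fisher} in its ``code'' direction, dual to the proof of Theorem~\ref{thm:upper_bound_design}. Whereas the design bound used a polynomial that is nonnegative on $[-1,1)$, here I would construct a test polynomial $G(x)\in\R[x]$ that is \emph{nonpositive} on the interval $[-1/2,1/2]$ (this is exactly where the two-sided hypothesis $A(X)\subset[-1/2,1/2]$ enters) while still having nonnegative Gegenbauer coefficients. Concretely, I would seek $G$ of the factored form
\[
G(x)=x^2\Big(x^2-\tfrac14\Big)P(x),
\]
where $P$ is an even polynomial of degree $6$ with $P(x)>0$ on $[-1/2,1/2]$. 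For such a $G$ one has $G(x)\le 0$ on $[-1/2,1/2]$ with $G(\alpha)=0$ exactly at $\alpha\in\{-1/2,0,1/2\}$, while $G(x)>0$ outside $[-1/2,1/2]$ (in particular $G(1)>0$).

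Writing the Gegenbauer expansion $G(x)=\sum_{\ell=0}^{10} g_\ell Q_\ell(x)$, the odd-indexed coefficients vanish by parity. I would then fix the remaining free parameters of $P$ (up to scaling) by imposing $g_6=g_8=0$, and verify that the surviving coefficients $g_0,g_2,g_4,g_{10}$ are all strictly positive and that $G(1)/g_0=12$. This produces a polynomial whose Gegenbauer support is exactly $\{0,2,4,10\}$, mirroring $F_T$ in \eqref{eq:F_T}. Granting such a $G$, the bound is immediate: since $g_\ell\ge0$ and $\|{}^tH_\ell H_0\|\ge0$ for all $\ell\ge1$, the left-hand side of the identity in Lemma~\ref{lem:fisher} is at least $g_0|X|^2$, while $G(\alpha)\le0$ for every $\alpha\in A(X)\subset[-1/2,1/2]$ together with $d_\alpha>0$ forces the right-hand side to be at most $G(1)|X|$. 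Hence $g_0 N^2\le G(1)N$ with $N=|X|$, and therefore $N\le G(1)/g_0=12$.

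For the equality characterization I would invoke complementary slackness: $N=12$ forces both $\sum_{\ell\ge1}g_\ell\|{}^tH_\ell H_0\|=0$ and $\sum_{\alpha\in A(X)}G(\alpha)d_\alpha=0$. Because $g_\ell>0$ precisely for $\ell\in\{2,4,10\}$, the first equality gives $\|{}^tH_\ell H_0\|=0$ for those $\ell$, i.e.\ $X$ is a spherical $\{10,4,2\}$-design; because $G$ is strictly negative on $[-1/2,1/2]$ away from $\{-1/2,0,1/2\}$ and each $d_\alpha>0$, the second equality gives $A(X)\subset\{-1/2,0,1/2\}$. Conversely, if $X$ is a $\{10,4,2\}$-design with $A(X)\subset\{-1/2,0,1/2\}$, then both sums vanish (the first by the design property together with $g_\ell=0$ for $\ell\notin\{0,2,4,10\}$, the second since $\{-1/2,0,1/2\}$ are the roots of $G$), whence $g_0N^2=G(1)N$ and $N=12$.

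The main obstacle is the construction of $G$: one must simultaneously arrange the two linear conditions $g_6=g_8=0$, the strict positivity of $g_0,g_2,g_4,g_{10}$, and the positivity of $P$ on $[-1/2,1/2]$, all from the one-parameter family of candidates that survives after imposing those conditions and normalizing. As in the proof of Theorem~\ref{thm:upper_bound_design}, this reduces to a finite, explicit computation with the $d=4$ Gegenbauer polynomials, but checking that a single $P$ meets every sign requirement at once is the delicate point. I would expect the resulting $G$ to be the LP-dual optimum of $F_T$, which makes the coincidence of the two bounds at $|X|=12$, and of their equality configurations, structurally unsurprising; the existence of a valid $G$ attaining the ratio $12$ is in any case guaranteed by the fact that the bound is met (e.g.\ by a half set of the $D_4$ root system).
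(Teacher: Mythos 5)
Your proposal is correct and is essentially the paper's own proof: the paper instantiates exactly your ansatz $G(x)=x^2\left(x^2-\tfrac14\right)P(x)$ with the one-parameter family $F_C(x)=x^2\left(x+\tfrac12\right)\left(x-\tfrac12\right)\left(x^6-2x^4+\tfrac54x^2+a_1\right)$, $a_1\ge0$, whose Gegenbauer expansion is supported on $\{0,2,4,10\}$ with positive coefficients $f_{10}=\tfrac{1}{11264}$, $f_4=\tfrac{64a_1+15}{5120}$, $f_2=\tfrac{64a_1+15}{1536}$, $f_0=\tfrac{4a_1+1}{64}$ and $F_C(1)/f_0=12$, and then runs precisely your slackness argument in both directions of the equality case. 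The one overreach is your closing claim that a valid $G$ is ``guaranteed by the fact that the bound is met'' --- attainment of $N=12$ by a code does not by itself furnish a degree-$10$ LP certificate, since LP bounds need not be tight --- but this does not create a gap, because the explicit $P(x)=x^6-2x^4+\tfrac54x^2+a_1$ (nonnegative on all of $\R$, as $x^4-2x^2+\tfrac54>0$) completes the finite computation your plan prescribes.
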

\begin{proof}
For 
$a_1 \ge0$, 
let us consider the function
\begin{equation}\label{eq:F_C}
\begin{aligned}
F_C(x) &:= 
\frac{1}{11264} Q_{10}(x) + \frac{64 a_1 + 15}{5120} Q_{4}(x) 
+  \frac{64 a_1 + 15}{1536} Q_{2}(x) + \frac{4 a_1 + 1}{64} \\
&= x^2 \left (x + \frac{1}{2} \right ) \left (x - \frac{1}{2} \right ) \left (x^6 - 2 x^4  + \frac{5}{4} x^2 +  a_1 \right ).
\end{aligned}
\end{equation}
The inequality $x^6 - 2 x^4  + \frac{5}{4} x^2\ge0$ holds for any $x\in [-1/2,1/2]$ which implies 
\[F_C(\alpha)\le0 \quad (\forall \alpha\in [-1/2,1/2]).\]
From Lemma \ref{lem:fisher} and the assumption $A(X)\subset [-1/2,1/2]$, we get the inequality
\begin{equation}\label{eq:ineq_F_C}
F_C(1)|X| -f_0|X|^2 = -\sum_{\alpha\in A(X)} F_C(\alpha)d_\alpha+\sum_{\substack{\ell=1}}^{10} f_\ell \| {}^tH_\ell H_0 \| \ge0,
\end{equation}
where $f_\ell$ denotes the coefficient of $F_C$ {corresponding to} $Q_\ell$.
Since $F_C(1)=\frac{3(4a_1+1)}{16}>0$, we obtain $F_C(1)/f_0=12\ge |X|=N$.
The equality in \eqref{eq:ineq_F_C} holds if and only if $F_C(\alpha)=0 \ (\forall \alpha \in A(X))$ and $\| {}^t H_{\ell} H_0 \|=0$ for all $\ell\in \{10,4,2\}$.
The desired result then follows from $\{\alpha \in \R \mid F_C(\alpha)=0\} = \{-1/2,0,1/2\}$.
\end{proof}

\section{The $D_4$ lattice and spherical $\{10,4,2\}$-designs}
This section gives the construction of a tight antipodal spherical $\{10,4,2\}$-design on $\mathbb{S}^3$ from the shells of the $D_4$ lattice.

Following \cite[Section 1.4]{Ebeling}, we define {\itshape the $D_4$ lattice} by 
\[D_4:=\{\boldsymbol{x}=(x_1,x_2,x_3,x_4)\in \Z^4 \mid x_1+x_2+x_3+x_4\equiv 0 \mod 2\}.\]
For $m\in \Z_{\ge0}$, {\itshape the $m$-shell} of the $D_4$ lattice is denoted by
\[ \big(D_4\big)_m := \{ \boldsymbol{x} \in D_4 \mid x_1^2+x_2^2+x_3^2+x_4^2=m\}.\]
It follows that $\big(D_4\big)_m= \varnothing$, if $m$ is odd.
When $m$ is even, $ \big(D_4\big)_m$ is not the empty set because of the Jacobi's four-square theorem {(see e.g., \cite[p.19]{DS})}
\begin{equation}\label{eq:Jacobi}
|(D_4)_{2m}|=24\sum_{\substack{d\mid 2m\\ d:{\rm odd}}}d.
\end{equation}
For instance, the $2$-shell $(D_4)_{2}$ (the set of minimal vectors of $D_4$) consists of 24 points; all permutations of $(\pm1,\pm1,0,0)$.  
Note that the $2$-shell $(D_4)_{2}$, which is called {\itshape the $D_4$ root system}, generates the $D_4$ lattice.
{We set $\mathbf{D}_4:=(D_4)_2$}.

We now prove that the normalized set 
\[ \frac{1}{\sqrt{2m}}\big(D_4\big)_{2m}:=\left\{ \frac{1}{\sqrt{2m}}\boldsymbol{x}\ \middle| \ \boldsymbol{x}\in \big(D_4\big)_{2m}\right\}\]
on $\mathbb{S}^3$ is an example of antipodal spherical $\{10,4,2\}$-designs.
There are at least two proofs of this.
One is based on some spherical design properties on group orbits.
The other uses the theory of modular forms, which will be mentioned in Remark \ref{rem:another proof of the strength}.
Here we give the former proof.

We recall that the orthogonal transformation group 
\[O(\R^d):=\{\sigma:\R^d\rightarrow\R^d\mid \langle \sigma(\boldsymbol{x}),\sigma(\boldsymbol{y})\rangle=\langle \boldsymbol{x},\boldsymbol{y}\rangle\ \mbox{for all}\  \boldsymbol{x},\boldsymbol{y}\in \R^d\}\]
of $\R^d$ acts on ${\rm Harm}_\ell(\R^{d})$ by $\big(\sigma^\ast P \big)(\boldsymbol{x}):=P(\sigma(\boldsymbol{x}))$ for $P\in {\rm Harm}_\ell(\R^{d})$ and $\sigma\in O(\R^d)$.
For a subgroup $G$ of $O(\R^d)$, the $G$-invariant subspace of ${\rm Harm}_{\ell} (\R^d)$ is denoted by ${\rm Harm}_\ell (\R^d)^G :=\{ P\in {\rm Harm}_\ell (\R^d) \mid \sigma^\ast P=P\ \mbox{for all}\ \sigma \in G\}$. 

\begin{lem}\label{lem:inv_harm}
For any finite subgroup $G$ of $O(\R^d)$ and $\boldsymbol{x}\in \mathbb{S}^{d-1}$, the $G$-orbit $\boldsymbol{x}^G:=\{\sigma(\boldsymbol{x})\in \mathbb{S}^{d-1}\mid \sigma\in G\} $ is a spherical $T$-design with $T=\{\ell \in \N \mid \dim {\rm Harm}_\ell (\R^d)^G=0 \}$.
Moreover, if $G$ has $-I$, which sends $\boldsymbol{y}$ to $-\boldsymbol{y}$ for $\boldsymbol{y}\in \R^d$, then $\boldsymbol{x}^G$ is antipodal, and its {every} half set is a spherical $T'$-design with $T'=\{2\ell \in 2\N \mid \dim {\rm Harm}_{2\ell} (\R^d)^G=0 \}$. 
\end{lem}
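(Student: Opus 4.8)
The plan is to prove the design property by a group-averaging (Reynolds) argument on harmonic polynomials. First I would record that $O(\R^d)$ preserves ${\rm Harm}_\ell(\R^d)$: since the Laplacian $\Delta_d$ commutes with every orthogonal transformation, the pullback $\sigma^\ast P$ of a homogeneous harmonic polynomial $P$ of degree $\ell$ is again homogeneous harmonic of degree $\ell$ for each $\sigma\in G$. Consequently the average $\overline{P}:=\frac{1}{|G|}\sum_{\sigma\in G}\sigma^\ast P$ is a well-defined element of the invariant subspace ${\rm Harm}_\ell(\R^d)^G$.

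Next, suppose $\ell\in T$, that is, $\dim {\rm Harm}_\ell(\R^d)^G=0$. Then $\overline{P}$ is the zero polynomial, so in particular $\overline{P}(\boldsymbol{x})=0$; unwinding the definition of $\sigma^\ast$ gives $\sum_{\sigma\in G}P(\sigma(\boldsymbol{x}))=0$. The one point needing care is that the orbit $\boldsymbol{x}^G$ is a set of \emph{distinct} points whereas the sum over $G$ repeats each of them. I would resolve this with the orbit--stabilizer relation: writing $G_{\boldsymbol{x}}:=\{\sigma\in G\mid \sigma(\boldsymbol{x})=\boldsymbol{x}\}$, each point $\boldsymbol{y}\in \boldsymbol{x}^G$ is the image $\sigma(\boldsymbol{x})$ for exactly $|G_{\boldsymbol{x}}|$ elements $\sigma\in G$ (the elements in one left coset of $G_{\boldsymbol{x}}$), whence $\sum_{\sigma\in G}P(\sigma(\boldsymbol{x}))=|G_{\boldsymbol{x}}|\sum_{\boldsymbol{y}\in \boldsymbol{x}^G}P(\boldsymbol{y})$. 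Since $|G_{\boldsymbol{x}}|\ge1$, the orbit sum vanishes for every $P\in {\rm Harm}_\ell(\R^d)$ with $\ell\in T$, proving that $\boldsymbol{x}^G$ is a spherical $T$-design.

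For the second assertion, if $-I\in G$ then for any $\boldsymbol{y}=\sigma(\boldsymbol{x})\in \boldsymbol{x}^G$ we have $-\boldsymbol{y}=(-I)\sigma(\boldsymbol{x})\in \boldsymbol{x}^G$ because $(-I)\sigma\in G$; hence $\boldsymbol{x}^G$ is antipodal. The description of $T'$ then follows at once by feeding the first part into Lemma \ref{lem:half set of antipodal T-design}: any half set is a spherical $T'$-design with $T'=\{2\ell\in 2\N\mid 2\ell\in T\}=\{2\ell\in 2\N\mid \dim {\rm Harm}_{2\ell}(\R^d)^G=0\}$.

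I do not expect a genuine obstacle here; the argument is essentially the invariance of $\Delta_d$ under $O(\R^d)$ combined with Lemma \ref{lem:half set of antipodal T-design}. The only subtle bookkeeping is the distinction between summing over group elements and over distinct orbit points, which the orbit--stabilizer count above handles cleanly.
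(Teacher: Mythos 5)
Your proof is correct and follows essentially the same route as the paper: the identity $\sum_{\boldsymbol{y}\in \boldsymbol{x}^G}P(\boldsymbol{y})=\frac{1}{|G_{\boldsymbol{x}}|}\sum_{\sigma\in G}(\sigma^\ast P)(\boldsymbol{x})$ via orbit--stabilizer, the observation that the averaged polynomial lies in ${\rm Harm}_\ell(\R^d)^G$ and hence vanishes when that space is zero, and Lemma \ref{lem:half set of antipodal T-design} for the antipodal half-set statement. Your explicit remarks that $\sigma^\ast$ preserves harmonicity and that the stabilizer count handles repeated orbit points are details the paper leaves implicit, but the argument is the same.
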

\begin{proof}
Let $G_{\boldsymbol{x}}$ denote the stabilizer subgroup of $\boldsymbol{x}$.
For $P\in {\rm Harm}_{\ell}(\R^d)$, we have
\[ \sum_{\boldsymbol{y}\in \boldsymbol{x}^G} P(\boldsymbol{y})= \frac{1}{|G_{\boldsymbol{x}}|}\sum_{\sigma \in G} (\sigma^\ast P)(\boldsymbol{x}). \]
The first statement follows from the fact that the map $ {\rm Harm}_{\ell} (\R^d)\rightarrow  {\rm Harm}_{\ell} (\R^d)^G,P\mapsto \sum_{\sigma \in G} (\sigma^\ast P)$ is surjective.

Suppose that $-I\in G$.
We have $-\boldsymbol{y}\in \boldsymbol{x}^G$ for any $\boldsymbol{y}\in \boldsymbol{x}^G$.
Hence $\boldsymbol{x}^G$ is antipodal.
The latter statement follows from Lemma \ref{lem:half set of antipodal T-design}.
\end{proof}

We note that for spherical $T$-designs $X_1$ and $X_2$ on $\mathbb{S}^{d-1}$, the union $X_1\cup X_2$ is {also} a spherical $T$-design if $X_1\cap X_2 =\varnothing$.

\begin{prop}\label{prop:D4 root}
For any $m\ge1$, the subset $\frac{1}{\sqrt{2m}}\big(D_4\big)_{2m}$ of $\mathbb{S}^3$ is an antipodal spherical $\{10,4,2\}$-design.
{Moreover}, for any $n\ge1$ the set {$\frac{1}{\sqrt{2^{n}}}\big(D_4\big)_{2^n}$} is a tight antipodal spherical $\{10,4,2\}$-design on $\mathbb{S}^3$.
\end{prop}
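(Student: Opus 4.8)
The plan is to realize every shell as a union of orbits of one fixed finite reflection group and then apply Lemma~\ref{lem:inv_harm}. The right group is the Weyl group $G:=W(\mathbf{F}_4)$ of the root system $\mathbf{F}_4$, viewed inside $O(\R^4)$. First I would verify that $G$ stabilizes $D_4$: applying the reflection formula $s_\alpha(\boldsymbol{x})=\boldsymbol{x}-2\frac{\langle\boldsymbol{x},\alpha\rangle}{\langle\alpha,\alpha\rangle}\alpha$ to the long roots of $\mathbf{F}_4$ (the elements of $(D_4)_2$) and to the short roots (the vectors $(\pm1,0,0,0)$ and $\frac12(\pm1,\pm1,\pm1,\pm1)$) shows that each generating reflection sends $D_4$ to itself. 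Since $G\subset O(\R^4)$ also preserves norms, it maps each shell $(D_4)_{2m}$ onto itself, so $\frac{1}{\sqrt{2m}}(D_4)_{2m}$ is a disjoint union of $G$-orbits of normalized lattice vectors.

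By Lemma~\ref{lem:inv_harm}, a single $G$-orbit is a spherical $T$-design with $T=\{\ell\in\N\mid \dim{\rm Harm}_\ell(\R^4)^G=0\}$, so the task reduces to locating the vanishing degrees of the harmonic Molien series $\Phi_G(t):=\sum_{\ell\ge0}\dim{\rm Harm}_\ell(\R^4)^G\,t^\ell$. Using the $G$-equivariant decomposition of $\R[x_1,\dots,x_4]$ into harmonic polynomials times powers of $\langle\boldsymbol{x},\boldsymbol{x}\rangle$, this series is the ordinary Molien series of $\R[x_1,\dots,x_4]^G$ multiplied by $1-t^2$. As $W(\mathbf{F}_4)$ is a reflection group with fundamental degrees $2,6,8,12$, its invariant ring is polynomial and
\[
\Phi_G(t)=(1-t^2)\cdot\frac{1}{(1-t^2)(1-t^6)(1-t^8)(1-t^{12})}=\frac{1}{(1-t^6)(1-t^8)(1-t^{12})}.
\]
Expanding this power series, the coefficients at $t^2$, $t^4$ and $t^{10}$ all vanish (while those at $t^6$ and $t^8$ equal $1$), and every odd coefficient vanishes as well. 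Hence $\{10,4,2\}\subset T$, each normalized $G$-orbit is a spherical $\{10,4,2\}$-design, and it is antipodal because $-I\in W(\mathbf{F}_4)$. Invoking the remark that a disjoint union of spherical $T$-designs is a spherical $T$-design, I conclude that $\frac{1}{\sqrt{2m}}(D_4)_{2m}$ is an antipodal spherical $\{10,4,2\}$-design for every $m\ge1$.

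For the tightness statement I would specialize the four-square count \eqref{eq:Jacobi}. The shell $(D_4)_{2^n}$ corresponds to $2m=2^n$, whose only odd divisor is $1$, so $|(D_4)_{2^n}|=24\sum_{\substack{d\mid 2^n\\ d:\mathrm{odd}}}d=24$. Thus $\frac{1}{\sqrt{2^n}}(D_4)_{2^n}$ is an antipodal spherical $\{10,4,2\}$-design with exactly $24$ points, which is by definition tight (see Theorem~\ref{thm:upper_bound_design} and Lemma~\ref{lem:half set of antipodal T-design}).

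I expect the main obstacle to be the harmonic Molien series computation: one must correctly pin down the fundamental degrees $2,6,8,12$ of $W(\mathbf{F}_4)$, justify the passage from the invariant ring to invariant harmonics via the factor $1-t^2$, and confirm that $2,4,10$ are exactly the even degrees below $12$ at which $\Phi_G(t)$ vanishes (notably $6$ and $8$ do not, which is what keeps the strength from containing $6$). The stabilization of $D_4$ by $W(\mathbf{F}_4)$, the decomposition of shells into orbits, and the divisor computation are all routine by comparison.
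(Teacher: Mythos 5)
Your proposal is correct and follows essentially the same route as the paper: decompose each shell into $W(\mathbf{F}_4)$-orbits, apply Lemma~\ref{lem:inv_harm} with the harmonic Molien series $\frac{1}{(1-t^6)(1-t^8)(1-t^{12})}$ (your fundamental degrees $2,6,8,12$ are exactly the paper's exponents $(1,5,7,11)$ shifted by one), and read off tightness from Jacobi's formula \eqref{eq:Jacobi} giving $|(D_4)_{2^n}|=24$. The only cosmetic differences are that you verify the stabilization of $D_4$ by checking the generating reflections directly, where the paper cites \cite{Pache05} and the identification $W(\mathbf{F}_4)={\rm Aut}(\mathbf{D}_4)$, and you get antipodality from $-I\in W(\mathbf{F}_4)$ rather than from the symmetry of the lattice itself.
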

\begin{proof}
We use the fact that the root system $\mathbf{D}_4$ is invariant under the action of the Weyl group $W(\mathbf{F}_4)$ of the root system $\mathbf{F}_4$ (this fact is already pointed out in \cite[Proposition {23}]{Pache05}).
The group $W(\mathbf{F}_4)$ is a discrete subgroup of $O(\R^4)$ of order 1152 and coincides with the automorphism group ${\rm Aut}(\mathbf{D}_4):=\{\sigma \in O(\R^4)\mid \sigma(\mathbf{D}_4)=\mathbf{D}_4\}$ of the root system $\mathbf{D}_4$ (see \cite{Bourbaki}).
Since the $D_4$ lattice is generated by the set $\mathbf{D}_4$, the set $\frac{1}{\sqrt{2m}}\big(D_4\big)_{2m}$ is also invariant under the action of $W(\mathbf{F}_4)$, and hence it has a $W(\mathbf{F}_4)$-orbit decomposition.
The harmonic Molien series for $W(\mathbf{F}_4)$ is calculated with the exponents $(m_1,m_2,m_3,m_4)=(1,5,7,11)$ (see e.g., \cite[Theorem 4.6]{GoethalsSeidel}), namely,
\begin{equation}\label{eq:har_molien}
\begin{aligned}
\sum_{\ell \ge0} \dim{\rm Harm}_\ell (\R^4)^{W(\mathbf{F}_4)} t^\ell&=(1-t^2)
\prod_{i=1}^4\frac{1}{1-t^{m_i+1}}= \frac{1}{(1-t^6)(1-t^8)(1-t^{12})}\\ &=1+t^6+t^8+2t^{12}+t^{14}+t^{16}+2t^{18}+\cdots.
\end{aligned}
\end{equation}
With this, the result follows from Lemma \ref{lem:inv_harm}.
The `Moreover' part follows from \eqref{eq:Jacobi}, namely, {that we have} $|(D_4)_{2^n}|=24$.
\end{proof}

Combining Proposition \ref{prop:D4 root} with Lemma \ref{lem:half set of antipodal T-design}, we see that every half set of $\frac{1}{\sqrt{2m}}\big(D_4\big)_{2m}$ is a spherical $\{10,4,2\}$-design.
In particular, it follows from Theorem \ref{thm:upper_bound_design} that every half set $X$ of $\frac{1}{\sqrt{2^{n}}}\big(D_4\big)_{2^n}$ is a $(4,12,1/2)$ spherical code with $A(X)\subset \{-1/2,0,1/2\}$.
Indeed, one can check that the inner product set of the normalized $D_4$ root system $\frac{1}{\sqrt{2}}\mathbf{D}_4$ is given by
\[ A\left(\frac{1}{\sqrt{2}}\mathbf{D}_4\right) = \left\{-1,-\frac12,0,\frac12\right\}.\]

\begin{rem}
According to \cite[Proposition 2]{BannaiZhaoZhuZhu18}, there exists a half set of $\frac{1}{\sqrt{2}}\mathbf{D}_4$ such that it is a spherical $\{10,4,2,1\}$-design {(a half set is not antipodal, so this is non-trivial)}.
\end{rem}

\section{Uniqueness of the antipodal spherical $\{10,4,2\}$-design}

In this section, we prove Theorem \ref{thm:classification}.
Let $X\subset\mathbb{S}^{d-1}$ be a spherical $T$-design. 
For any orthogonal transformation $\sigma\in O(\R^d)$, the set $\sigma(X)=\{\sigma(\boldsymbol{x})\mid \boldsymbol{x}\in X\}$ is again a spherical $T$-design.
Thus, an orthogonal transformation of $\frac{1}{\sqrt{2}}\mathbf{D}_4$ is still a tight antipodal spherical $\{10,4,2\}$-design on $\mathbb{S}^3$.
The goal is to prove the opposite statement, namely, any antipodal spherical $\{10,4,2\}$-design on $\mathbb{S}^3$ with 24 points is obtained from an orthogonal transformation of $\frac{1}{\sqrt{2}}\mathbf{D}_4$, which can be referred as a uniqueness theorem in the study of the classification of spherical designs.


Our proof is along the line of the proof of the uniqueness of the 600-cell $C_{600} \subset\mathbb{S}^3$ as a spherical $11$-design with $120$ points, given by Boyvalenkov-Danev \cite{BoyvalenkovDanev01}.
Let us first recall some relevant materials from it.

For $\boldsymbol{y}\in \mathbb{S}^{d-1}$ and a finite subset $X\subset \mathbb{S}^{d-1}$, we let 
\[A^{\boldsymbol{y}}(X):=\{\alpha\in [-1,1]\mid \mbox{there exists $\boldsymbol{x}\in X$ such that $\langle\boldsymbol{x},\boldsymbol{y}\rangle=\alpha$}\},\]
and for $\alpha \in [-1,1]$, we write $\widetilde{X}_{\alpha}^{\boldsymbol{y}}:=\{\boldsymbol{x}\in X \mid \langle \boldsymbol{x},\boldsymbol{y}\rangle=\alpha\}$.
Note that if $\boldsymbol{y}\in X$, then $1\in A^{\boldsymbol{y}}(X)\subset A(X)\cup\{1\}$.
The sequence of positive integers $(A^{\boldsymbol{y}}_\alpha (X))_{\alpha \in A^{\boldsymbol{y}}(X)}$, {where} $A_\alpha^{\boldsymbol{y}}(X):=|\widetilde{X}_{\alpha}^{\boldsymbol{y}}|$ is called the {\itshape distance distribution} of $X$ with respect to $\boldsymbol{y}$.
When a subset $X\subset \mathbb{S}^{d-1}$ is a spherical $t$-design {such that $|A^{\boldsymbol{y}}(X)|\le t+1$ for some $\boldsymbol{y}\in \mathbb{S}^{d-1}$}, the distance distribution of $X$ with respect to $\boldsymbol{y}$ is obtained as the unique solution to the Vandermonde system
\begin{equation}\label{eq:Vandermonde} 
\sum_{\alpha\in A^{\boldsymbol{y}}(X)} A_{\alpha}^{\boldsymbol{y}}(X) \alpha^j = a_j |X|, \quad j=0,1,\ldots,|A^{\boldsymbol{y}}(X)|-1, 
\end{equation}
where we set $a_0:=1,a_{2j}:=\frac{(2j-1)!!}{d(d+2)\cdots (d+2j-2)}$ and $a_{2j+1}:=0$ for $j\ge 1$ (the proof of \eqref{eq:Vandermonde} can be done by taking $F(x)=x^j,\ j=0,1,\ldots,t$, in the following equivalent definition of a spherical $t$-design \cite[Corollary 3.8, Theorem 5.5]{DelsarteGoethalsSeidel77}; for a finite set $X\subset \mathbb{S}^{d-1}$, $X$ is a spherical $t$-design if and only if for any $\boldsymbol{y}\in \mathbb{S}^{d-1}$ the equality $\sum_{\boldsymbol{x}\in X}F(\langle \boldsymbol{x},\boldsymbol{y}\rangle)=|X|f_0$ holds for all $F(x)\in \R[x]$ of degree at most $t$, where $f_0$ is the constant term of the Gegenbauer expansion of $F$ as in \eqref{eq:F}: See also \cite[Section 2.1]{Boyvalenkov95}).

{Following \cite{DelsarteGoethalsSeidel77}, we say that a set $X\subset \mathbb{S}^{d-1}$ with $N$ points} is called a $(d,N,s,t)$ {\itshape configuration}, if $X$ is a spherical $t$-design such that $s=|A(X)|$.
It follows that for $\boldsymbol{y}\in X$ and a $(d,N,s,t)$ configuration $X$ with $s\le t+1$, the Vandermonde system \eqref{eq:Vandermonde} (note that $1\in A^{\boldsymbol{y}}(X)$ and $A_1^{\boldsymbol{y}}(X)=1$) has the unique solution, because {the number $|A^{\boldsymbol{y}}(X)|-1$ of variables in the system of linear equations \eqref{eq:Vandermonde} is less than or equal to $t+1$}.
In this case, $A_{\alpha}^{\boldsymbol{y}}(X)$ does not depend on the choice of $\boldsymbol{y}\in X$ and we write $A_{\alpha}(X):=A_{\alpha}^{\boldsymbol{y}}(X)$.

\begin{thm}\label{thm:uniqueness}
For any tight antipodal spherical $\{10,4,2\}$-design $X$ on $\mathbb{S}^3$, there exists an orthogonal transformation $\sigma \in O(\R^4)$ such that $X=\sigma\big( \frac{1}{\sqrt{2}}\mathbf{D}_4\big) $.
\end{thm}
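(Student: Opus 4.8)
The plan is to follow the strategy that Boyvalenkov--Danev \cite{BoyvalenkovDanev01} used for the $600$-cell: first pin down the distance distribution of $X$, then reconstruct the local configuration around a fixed point, and finally assemble the whole design and recognise it, up to an element of $O(\R^4)$, as $\frac{1}{\sqrt 2}\mathbf{D}_4$.

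First I would collect the ambient data. As $X$ is antipodal, all odd-degree harmonics are annihilated automatically, so together with the strengths $\{10,4,2\}$ the set $X$ is in particular a spherical $5$-design on $\mathbb{S}^3$. By Lemma \ref{lem:half set of antipodal T-design} a half set of $X$ is a $\{10,4,2\}$-design with $12$ points, whence Theorem \ref{thm:upper_bound_design} gives that its inner products lie in $\{-1/2,0,1/2\}$; adjoining the antipodes yields $A(X)\subseteq\{-1,-1/2,0,1/2\}$. Thus $X$ is a $(4,24,4,5)$ configuration with $|A(X)|=4\le 5+1$, so for every $\boldsymbol{y}\in X$ the Vandermonde system \eqref{eq:Vandermonde} has a unique solution. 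Solving it (the odd moments vanish by antipodality, and $a_2=1/4$, $a_4=1/8$) gives $A^{\boldsymbol{y}}_{\pm1}(X)=1$, $A^{\boldsymbol{y}}_{\pm1/2}(X)=8$ and $A^{\boldsymbol{y}}_{0}(X)=6$, independently of $\boldsymbol{y}$; hence $X$ is distance invariant with exactly the distance distribution of the $D_4$ root system.

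Next I would fix $\boldsymbol{y}_0\in X$ and study the three layers. The six points orthogonal to $\boldsymbol{y}_0$ form an antipodal subset of the equatorial sphere $\mathbb{S}^2\subset\boldsymbol{y}_0^{\perp}$, and the goal is to show they are the vertices of a regular octahedron. The eight points $\boldsymbol{x}$ with $\langle\boldsymbol{x},\boldsymbol{y}_0\rangle=1/2$, written $\boldsymbol{x}=\tfrac12\boldsymbol{y}_0+\tfrac{\sqrt3}{2}\bar{\boldsymbol{x}}$ with $\bar{\boldsymbol{x}}\in\boldsymbol{y}_0^{\perp}\cap\mathbb{S}^2$, satisfy $\langle\boldsymbol{x},\boldsymbol{x}'\rangle=\tfrac14+\tfrac34\langle\bar{\boldsymbol{x}},\bar{\boldsymbol{x}}'\rangle$, so the projected inner products lie in $\{-1,-1/3,1/3\}$; the goal there is to identify the four antipodal pairs $\{\bar{\boldsymbol{x}},-\bar{\boldsymbol{x}}\}$, meeting at angle $\arccos(1/3)$, with the body diagonals of a cube. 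To force these two shapes I would, for a second base point $\boldsymbol{o}$ in a layer, evaluate the two-point moments $\sum_{\boldsymbol{x}\in X}\langle\boldsymbol{x},\boldsymbol{y}_0\rangle^{a}\langle\boldsymbol{x},\boldsymbol{o}\rangle^{b}$ for all $a+b\le 5$: every such polynomial has harmonic components only in degrees $\le 5$, all of which lie in the harmonic strength of $X$, so each sum equals $|X|$ times the corresponding surface integral and is therefore known. This yields a linear system for the intersection numbers $N_{s,t}=|\{\boldsymbol{x}\in X:\langle\boldsymbol{x},\boldsymbol{y}_0\rangle=s,\ \langle\boldsymbol{x},\boldsymbol{o}\rangle=t\}|$, and combining it with non-negativity, integrality, and the positive semidefiniteness and rank $4$ of the Gram matrix of $X$ should determine the inner products inside each layer, forcing the octahedron and the cube.

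Finally I would assemble the pieces: the octahedron from the orthogonal layer and the cube from the $1/2$-layer are dual, their relative position is fixed up to the common symmetry group, and together with $\pm\boldsymbol{y}_0$ they exhaust all $24$ points and determine the full Gram matrix of $X$. Since a symmetric positive semidefinite matrix of rank $4$ determines its configuration in $\R^4$ uniquely up to $O(\R^4)$, matching the outcome with $\frac{1}{\sqrt2}\mathbf{D}_4$ (Proposition \ref{prop:D4 root}) completes the argument. The step I expect to be the main obstacle is precisely this rigidity: because the strengths $\{10,4,2\}$ only make $X$ a spherical $5$-design (degrees $6$ and $8$ are missing), the moment equations do not by themselves close the combinatorics, and one must lean on the rank-$4$ positive-semidefinite constraint to prune all configurations compatible with $A(X)\subseteq\{-1,-1/2,0,1/2\}$ down to the single $D_4$ pattern — the same difficulty met, and overcome, in the $600$-cell analysis of \cite{BoyvalenkovDanev01}.
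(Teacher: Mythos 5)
Your opening is sound and matches the paper: antipodality plus strength $\{10,4,2\}$ makes $X$ a spherical $5$-design, Lemma \ref{lem:half set of antipodal T-design} and Theorem \ref{thm:upper_bound_design} give $A(X)\subseteq\{-1,-\frac12,0,\frac12\}$, and the Vandermonde system \eqref{eq:Vandermonde} yields the distance distribution $A_{\pm1}(X)=1$, $A_{\pm\frac12}(X)=8$, $A_0(X)=6$, so $X$ is a $(4,24,4,5)$ configuration. The genuine gap comes at the decisive step, and you name it yourself: you do not prove that the $0$-layer is an octahedron, that the $\pm\frac12$-layers project to a cube, or that the cube sits in dual position to the octahedron. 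Your proposed mechanism --- two-point moments $\sum_{\boldsymbol{x}\in X}\langle\boldsymbol{x},\boldsymbol{y}_0\rangle^a\langle\boldsymbol{x},\boldsymbol{o}\rangle^b$ with $a+b\le5$, plus integrality and a rank-$4$ positive-semidefiniteness pruning of the Gram matrix --- is never carried out, and the assembly step asserts that ``their relative position is fixed up to the common symmetry group,'' which is precisely the rigidity statement that requires proof. As written, the argument is a plan with its central lemma replaced by ``should determine.''

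The paper closes exactly this gap with the derived code machinery of Delsarte--Goethals--Seidel \cite{DelsarteGoethalsSeidel77}, and it is worth noting that your pessimism about degrees $6$ and $8$ being unavailable is unfounded: once $A(X)$ is pinned down, the $5$-design property alone suffices. For each $\alpha\in\{0,\pm\frac12\}$ the projected layer $X_\alpha\subset\mathbb{S}^2$ is a spherical $3$-design (\cite[Theorem 8.2]{DelsarteGoethalsSeidel77}), and since $A(X_0)\subseteq\{-1,-\frac12,0,\frac12\}$ has at most $4=3+1$ elements, the Vandermonde system on $\mathbb{S}^2$ is again uniquely solvable; its solution gives $A_{\pm\frac12}(X_0)=0$, so $X_0$ is a tight antipodal $3$-design on $\mathbb{S}^2$ with $6$ points, hence the regular octahedron $\sigma'(C_6)$ by the uniqueness in \cite[Theorem 6.8]{DelsarteGoethalsSeidel77} --- no semidefinite pruning needed. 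Two further Vandermonde computations of the same kind finish the rigidity: the distance distribution of $X_{-\frac12}$ with respect to $\boldsymbol{y}\in X_{\frac12}$ has $A_1^{\boldsymbol{y}}=1$, forcing $X_{\frac12}=X_{-\frac12}$; and the distance distribution of $C_6$ with respect to a point $\boldsymbol{y}$ of the projected $\frac12$-layer shows $\langle\boldsymbol{x},\boldsymbol{y}\rangle=\pm\frac{1}{\sqrt3}$ for \emph{all} $\boldsymbol{x}\in C_6$ (the value $0$ occurs with multiplicity zero), which forces $\boldsymbol{y}\in\{(\pm\frac{1}{\sqrt3},\pm\frac{1}{\sqrt3},\pm\frac{1}{\sqrt3})\}$ and hence the cube in exact dual position. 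After that the layers $\{\pm\boldsymbol{e}\}\cup\widetilde{X}^{\boldsymbol{e}}_0\cup\widetilde{X}^{\boldsymbol{e}}_{\frac12}\cup\widetilde{X}^{\boldsymbol{e}}_{-\frac12}$ are matched to a rotation of $\frac{1}{\sqrt2}\mathbf{D}_4$ explicitly. To repair your write-up, replace the unproven moment-plus-PSD pruning by this derived-code argument; the rest of your outline then goes through essentially as in the paper, and in fact the same Vandermonde tool you already used for the global distance distribution is the only tool required.
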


\begin{proof}
By Lemma \ref{lem:half set of antipodal T-design} and Theorem \ref{thm:upper_bound_design}, a half set $X'$ of $X$ is a $(4,12,1/2)$ spherical code with $A(X')\subset \{-1/2,0,1/2\}$, so $A(X)\subset \{-1,-1/2,0,1/2\}$.
Since $X$ is a $(4,24,s,5)$ configuration with $s\le 4$, the distance distribution $(A^{\boldsymbol{y}}_\alpha (X))_{\alpha \in A^{\boldsymbol{y}}(X)}$ of $X$ does not depend on the choice of $\boldsymbol{y}\in X$.
Solving the equations \eqref{eq:Vandermonde}, we get
\[ A_{-1}(X)=1, \quad A_{-\frac12}(X)=A_{\frac12}(X)=8, \quad A_{0}(X)=6,\]
which implies that $X$ is a $(4,24,4,5)$ configuration.

For each $\alpha \in A(X)\setminus\{-1\}$, we now recall a derived code $X_{\alpha}\subset \mathbb{S}^2$ of $X$ introduced in \cite[Section 8]{DelsarteGoethalsSeidel77}.
We may assume $\boldsymbol{e}=(0,0,0,1)\in X$ (if not, one can take $\tau \in O(\R^4)$ such that $\boldsymbol{e}\in \tau(X)$).
For any $\boldsymbol{x}\in \widetilde{X}_{\alpha}^{\boldsymbol{e}}$, it holds that
\[\frac{1}{\sqrt{1-\alpha^2}}(\boldsymbol{x}-\alpha\boldsymbol{e})\in \{\boldsymbol{y}\in \mathbb{S}^3\mid \langle \boldsymbol{y},\boldsymbol{e}\rangle =0\}=\{(y_1,y_2,y_3,0)\in \mathbb{S}^3\}.\]
Thus, the image of $ \widetilde{X}_{\alpha}^{\boldsymbol{e}}\subset\mathbb{S}^3$ under the composition map
\[
\begin{array}{cccccccc}
p_\alpha :& \R^4 & {\longrightarrow} & \R^4 & {\longrightarrow} & \R^3,\\
& \boldsymbol{x} & \longmapsto & \frac{1}{\sqrt{1-\alpha^2}}(\boldsymbol{x}-\alpha\boldsymbol{e})=(z_1,z_2,z_3,z_4) & \longmapsto & (z_1,z_2,z_3).
\end{array}
\]
lies in $\mathbb{S}^2$.
The image 
\[X_{\alpha}:= p_{\alpha}\big(\widetilde{X}_{\alpha}^{\boldsymbol{e}}\big)\subset \mathbb{S}^2,\]
called the {\itshape derived code}, is also a spherical design with the strength weakened (see \cite[Theorem 8.2]{DelsarteGoethalsSeidel77} for more details).
In our case, $X_{\alpha}$ becomes a spherical $3$-design on $\mathbb{S}^2$.

Let us consider the inner product set $A(X_\alpha)$ for each $\alpha\in\{0,\pm\frac12\}$.
By definition, one easily finds that $A(X_{\alpha}) \subset \left\{ \frac{\beta-\alpha^2}{1-\alpha^2} \, \middle| \, \beta\in A(X)\right\}$.
Computing the terms $\frac{\beta-\alpha^2}{1-\alpha^2} $, we get
\[ A(X_{\pm\frac12}) \subset \left\{-1,-\frac13,\frac13\right\} \quad \mbox{and}\quad A(X_{0}) \subset \left\{-1,-\frac12,0,\frac12\right\}.\]
Namely, the sets $X_{\pm\frac12}$ and $X_0$ are $(3,8,s_1,3)$ and $(3,6,s_2,3)$ configurations with $s_1\le 3$ and $s_2\le 4$, respectively.
For each $X_\alpha$, one can compute the unique solution to the Vandermonde system \eqref{eq:Vandermonde}.
Indeed, we have that
\begin{align*}
&A_{-1}\big(X_{\pm\frac12}\big) =1,\quad A_{\pm\frac13}\big(X_{\pm\frac12} \big)=3,\\
&\quad A_{-1}\big(X_{0} \big)=1,\quad A_{\pm\frac12}\big(X_{0} \big)=0,\quad A_{0}\big(X_{0} \big)=4.
\end{align*}
Hence the sets $X_{\pm\frac12}$ and $X_0$ are $(3,8,3,3)$ and $(3,6,2,3)$ configurations, respectively.
Both $A_{-1}(X_\alpha)=1$ and its independence of the choice of $\boldsymbol{y}\in X_{\alpha}$ imply $X_\alpha$ being antipodal.
Remark that the antipodal $(3,6,2,3)$ configuration $X_0$, which by \cite[Theorem 6.8]{DelsarteGoethalsSeidel77} is a tight antipodal spherical $3$-design on $\mathbb{S}^2$, is an orthogonal transformation of the set $C_6:=\{(\pm1,0,0),(0,\pm1,0),(0,0,\pm1)\}$ of vertices of the regular octahedron.
This shows that there exists an orthogonal transformation $\sigma'\in O(\R^3)$ such that 
\[X_0=\sigma'(C_6).\]

We now prove that $X_{-\frac12}=X_{\frac12}$ and $X_{\frac12}=\sigma'(C_8)$, where $C_8:=\big\{ \big( \pm\frac{1}{\sqrt{3}}, \pm\frac{1}{\sqrt{3}} , \pm\frac{1}{\sqrt{3}} \big)\big\}$.
It can be checked that the distance distribution of $X_{-\frac12}$ with respect to $\boldsymbol{y}\in X_{\frac12}$ satisfies
\[ A^{\boldsymbol{y}}\big(X_{-\frac12}\big)\subset \left\{ -1,-\frac{1}{3},\frac13,1\right\},\]
because, by definition of the derived code, $\langle \boldsymbol{x},\boldsymbol{y}\rangle \in \left\{ \frac{\alpha+\frac14}{1-\frac{1}{4}}\, \middle| \, \alpha \in A(X) \right\} $ holds for all $\boldsymbol{x}\in X_{-\frac12}$.
Thus, $|A^{\boldsymbol{y}}\big(X_{-\frac12}\big)|\le 4$, and hence, one can solve the Vandermonde system \eqref{eq:Vandermonde} to get
\[A_{-1}^{\boldsymbol{y}}\big(X_{-\frac12}\big)=1,\quad A_{-\frac13}^{\boldsymbol{y}}\big(X_{-\frac12}\big)=3,\quad A_{\frac13}^{\boldsymbol{y}}\big(X_{-\frac12}\big)=3,\quad A_{1}^{\boldsymbol{y}}\big(X_{-\frac12}\big)=1.\]
The last equality implies $\boldsymbol{y}\in X_{-\frac12}$.
Since the above equation holds for any $\boldsymbol{y}\in X_{\frac12}$, one finds that $X_{\frac12} \subset X_{-\frac12}$, which implies the desired equality $X_{\frac12} = X_{-\frac12}$.
To show that $C_8=(\sigma')^{-1}\big( X_{\frac12}\big)$, we again compute the distance distribution of $C_6$ with respect to $\boldsymbol{y}\in C_8$ by the Vandermonde system \eqref{eq:Vandermonde} and it holds that
\[A^{\boldsymbol{y}}\big(C_6\big)=\left\{ -\frac{1}{\sqrt{3}}, \frac{1}{\sqrt{3}}\right\}\quad\mbox{and}\quad A^{\boldsymbol{y}}_{\pm\frac{1}{\sqrt{3}}} \big(C_6\big)  =3.\]
Namely, $\boldsymbol{y}\in C_8$ satisfies $\langle \boldsymbol{x},\boldsymbol{y}\rangle =\pm\frac{1}{\sqrt{3}}$ for all $\boldsymbol{x}\in C_6$.
This implies that $C_8\subset \big\{ \big( \pm\frac{1}{\sqrt{3}}, \pm\frac{1}{\sqrt{3}} , \pm\frac{1}{\sqrt{3}} \big)\big\}$.
Since $|C_8|=8$, the equality holds.



Finally, we prove that $X=\sigma\big( \frac{1}{\sqrt{2}}\mathbf{D}_4\big) $ with $\sigma=(\sigma'\otimes 1)\circ R\in O(\R^4)$, where we set $(\sigma'\otimes 1)(x_1,x_2,x_3,x_4)=(\sigma'(x_1,x_2,x_3),x_4)$ and $R$ is a rotation whose matrix representation with respect to the standard basis is given by
\[  \begin{pmatrix}\cos \frac{\pi}{4}&-\sin \frac{\pi}{4}&0&0\\ \sin \frac{\pi}{4}&\cos \frac{\pi}{4}&0&0 \\ 0&0&\cos \frac{\pi}{4}&-\sin \frac{\pi}{4}\\ 0&0&\sin \frac{\pi}{4}&\cos \frac{\pi}{4}\end{pmatrix}.\]
Define the map $q_\alpha: \mathbb{S}^2\rightarrow \mathbb{S}^3$ that sends $(x_1,x_2,x_3)$ to $\sqrt{1-\alpha^2}(x_1,x_2,x_3,\frac{\alpha}{\sqrt{1-\alpha^2}})$.
Note that $q_\alpha(X_\alpha) = \widetilde{X}_{\alpha}^{\boldsymbol{e}}$.
For simplicity, for $X'\subset \mathbb{S}^2$ and $-1<\alpha<1$, we set $(X',\alpha):=\{(x_1,x_2,x_3,\alpha)\mid (x_1,x_2,x_3)\in (\sqrt{1-\alpha^2}) X'\}$, which is $q_\alpha(X')$.
With this, one computes
\begin{align*}
(\sigma'\otimes 1)\big(q_0(C_6)\big) = (\sigma'(C_6),0)=(X_0,0)=\widetilde{X}_0^{\boldsymbol{e}}
\end{align*}
and
\begin{align*}
(\sigma'\otimes 1)\big(q_{\pm\frac12}(C_8)\big) = \left(\sigma'(C_8),\pm\frac{1}{2}\right)=\left(X_{\pm\frac12},\pm\frac{1}{2}\right)=\widetilde{X}_{\pm \frac12}^{\boldsymbol{e}}.
\end{align*}
Since
\[ \{\pm \boldsymbol{e}\}\cup q_0(C_6)\cup q_{\frac12}(C_{8})\cup q_{-\frac12}(C_{8}) = R\left(\frac{1}{\sqrt{2}}\mathbf{D}_4\right)  \]
and $X=\{\pm \boldsymbol{e}\}\cup \widetilde{X}_0^{\boldsymbol{e}}\cup \widetilde{X}_{\frac12}^{\boldsymbol{e}}\cup \widetilde{X}_{-\frac12}^{\boldsymbol{e}}$, we get
\[  (\sigma'\otimes 1)\circ R\left(\frac{1}{\sqrt{2}}\mathbf{D}_4\right) = X.\]
This completes the proof.
\end{proof}

For comparison, we mention the other combinatorial structures on the $D_4$ root system $\mathbf{D}_4$ without going into details. 
The set $\mathbf{D}_4$ has the structure of a $Q$-polynomial association scheme \cite{BB09} (this is verified because the inequality $t\geq 2s-3$ holds for $\mathbf{D}_4$, where $s$ is the size of the set of inner products between two distinct points and $t$ is the strength). 
The set $\mathbf{D}_4$ also has the structure of a kissing number configuration on $\mathbb{S}^3$ \cite{BV08, M08}. 
The positive semidefinite programming method is directly applicable for a proof of this kissing number \cite{BV08}. 
On the other hand, the set $\mathbf{D}_4$ is not universally optimal code \cite{CCEK07}. 
Any set satisfying $t \geq 2s-1$ is universally optimal, so the strength of $\mathbf{D}_4$ is not strong enough to give the optimality by itself. 
Compared to these results, our main result provides a new characterization of $\mathbf{D}_4$ for the design aspect.

\begin{rem}\label{rem:unique}
We briefly mention some of known uniqueness results.
Each of the 600-cell $C_{600}\subset \mathbb{S}^3$ \cite{BoyvalenkovDanev01}, the normalized $E_8$ root system $\frac{1}{\sqrt{2}}\mathbf{E}_8\subset \mathbb{S}^7$ \cite{BannaiSloane81} and the set of minimal vectors of the Leech lattice $\frac12 \Lambda_{24}\subset \mathbb{S}^{23}$ \cite{BannaiSloane81} is known to be unique as a spherical $t$-design with $N$ poitns, where $t$ and $N$ are indicated as follows.
\[\begin{array}{|c|c|c|c|}  \hline 
X& N & t & T \\ \hline \hline
C_{600} & 120 & 11 & \{58,46,38,34,28,26,22,18,16,14,10,8,6,4,2\} \\ \hline
\frac{1}{\sqrt{2}}\mathbf{E}_8 & 240 & 7 & \{10,6,4,2\} \\ \hline
\frac12 \Lambda_{24} & 196560 & 11 & \{14,10,8,6,4,2\}  \\ \hline
\end{array}\]
They are also unique as an antipodal spherical $T$-design with $N$ points for the above $T\subset \mathbb{N}$ and $N$.
In contrast, our case, the $D_4$ root system, is not unique as a spherical $5$-design (which is a consequence of the result from \cite{CCEK07}) and is unique as an antipodal spherical $\{10,4,2\}$-design.
Namely, the normalized $D_4$ root system is the first example such that it is not unique as a spherical $t$-design, but unique as an antipodal spherical $T$-design.
\end{rem}

\section{Application: orthogonal decompositions of shells}

As an application of the uniqueness of the antipodal spherical $\{10,4,2\}$-design on $\mathbb{S}^3$ with 24 points, we now prove that every normalized shell of the $D_4$ lattice is a disjoint union of certain orthogonal transformations of the normalized $D_4$ root system $\frac{1}{\sqrt{2}}\mathbf{D}_4$.

\begin{thm}\label{thm:D_4-decomp}
For any $m\ge1$, there exists a finite subset $S_m \subset O(\R^4)$ such that 
\[ \frac{1}{\sqrt{2m}}\big( D_4\big)_{2m} = \bigsqcup_{\sigma \in S_m} \sigma\big(\frac{1}{\sqrt{2}}\mathbf{D}_4\big). \]
\end{thm}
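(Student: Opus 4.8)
The plan is to realize the normalized shell as a disjoint union of $k:=\tfrac{1}{24}|(D_4)_{2m}|=\sum_{d\mid 2m,\,d\ \mathrm{odd}}d$ (the integer supplied by \eqref{eq:Jacobi}) congruent copies of the $24$-cell, each of which is a tight antipodal spherical $\{10,4,2\}$-design, and then to invoke Theorem \ref{thm:uniqueness} to rewrite every copy as an orthogonal transformation of $\tfrac{1}{\sqrt2}\mathbf D_4$. The source of these copies is quaternionic multiplication on $\R^4$.

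First I would identify $\R^4$ with the Hamilton quaternions $\h$ via $(x_1,x_2,x_3,x_4)\mapsto x_1+x_2 i+x_3 j+x_4 k$, so that $\langle\boldsymbol x,\boldsymbol x\rangle$ is the quaternion norm $N$ and $\R^4=\h$ carries the Hurwitz order $\mathcal O=\Z^4+\Z\cdot\tfrac12(1+i+j+k)$, whose unit group $U$ is the binary tetrahedral group of the $24$ unit quaternions $\{\pm1,\pm i,\pm j,\pm k\}\cup\{\tfrac12(\pm1\pm i\pm j\pm k)\}$, geometrically the vertices of a $24$-cell on $\mathbb S^3$. The heart of the argument is the claim that \emph{right} multiplication $R_u\colon\boldsymbol x\mapsto\boldsymbol x u$ by any $u\in U$ maps the lattice $D_4$ onto itself. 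Since each $R_u$ is $\Z$-linear and $R_{uv}=R_v\circ R_u$, the set of units enjoying this property is a subgroup, so it suffices to check a generating set, say $u=i$ (where $R_i$ is a signed coordinate permutation, obviously preserving the even-sum lattice $D_4$) and $u=\tfrac12(1+i+j+k)$ (where one verifies on the four standard generators of $D_4$ that the images again have even coordinate sum). As $R_u$ is an isometry with $R_u(D_4)\subseteq D_4$, equality $R_u(D_4)=D_4$ follows.

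Granting this, each $R_u$ is an orthogonal transformation of $\R^4$ preserving every shell $(D_4)_{2m}$, and $\{R_u\}_{u\in U}$ realizes a free action of $U$ on $(D_4)_{2m}$ (freeness because $\boldsymbol x u=\boldsymbol x$ forces $u=1$, as $\boldsymbol x$ is invertible in $\h$). Hence $(D_4)_{2m}$ partitions into $U$-orbits $\boldsymbol h U$, each of cardinality $24$, and there are exactly $k$ of them. For a fixed orbit representative $\boldsymbol h$, left multiplication $L_{\boldsymbol h}$ is a similarity of ratio $|\boldsymbol h|=\sqrt{2m}$, so the normalized orbit $\tfrac{1}{\sqrt{2m}}\boldsymbol h U=L_{\boldsymbol h/|\boldsymbol h|}(U)$ is an orthogonal image of the $24$-cell $U$; since $U$ is itself an orthogonal image of $\tfrac{1}{\sqrt2}\mathbf D_4$ (by Proposition \ref{prop:D4 root} together with the fact that orthogonal images of designs are designs, or concretely via the $\pi/4$-rotation $R$ of the proof of Theorem \ref{thm:uniqueness}), each normalized orbit is a tight antipodal spherical $\{10,4,2\}$-design. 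By Theorem \ref{thm:uniqueness} there is $\rho_{\boldsymbol h}\in O(\R^4)$ with $\tfrac{1}{\sqrt{2m}}\boldsymbol h U=\rho_{\boldsymbol h}\big(\tfrac{1}{\sqrt2}\mathbf D_4\big)$. Collecting one such $\rho_{\boldsymbol h}$ per orbit into $S_m$ and using that distinct orbits yield disjoint normalized pieces gives the asserted decomposition $\tfrac{1}{\sqrt{2m}}(D_4)_{2m}=\bigsqcup_{\sigma\in S_m}\sigma\big(\tfrac{1}{\sqrt2}\mathbf D_4\big)$ with $|S_m|=k$.

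The step I expect to be the main obstacle is the invariance claim $R_u(D_4)=D_4$ for all $u\in U$: although it reduces to a finite check on generators, it is precisely where the special relationship between $D_4$ and the Hurwitz quaternions is exploited, and it must be argued with care because $D_4$ is not an order (it does not contain $1$), so one cannot simply quote closure of $\mathcal O$ under multiplication. A secondary point is pinning down that $U$ is an orthogonal image of $\tfrac{1}{\sqrt2}\mathbf D_4$, which the uniqueness theorem handles cleanly but which can equally be exhibited by the explicit rotation already appearing in Section 5.
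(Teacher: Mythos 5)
Your proposal is correct, but it takes a genuinely different route from the paper's own proof of Theorem \ref{thm:D_4-decomp} --- in fact, it is essentially the alternative quaternionic argument that the paper itself sketches in the remark immediately following the theorem. The paper's proof stays inside the design-theoretic machinery: it decomposes $(D_4)_{2m}$ into $W(\mathbf{F}_4)$-orbits, then uses a Magma computation to produce a subgroup $N\le W(\mathbf{F}_4)$ of order $24$ with $-I\in N$ whose harmonic Molien series has vanishing coefficients in degrees $2,4,10$; Lemma \ref{lem:inv_harm} then makes every half set of every $N$-orbit a spherical $\{10,4,2\}$-design with at most $12$ points, the linear programming bound of Theorem \ref{thm:upper_bound_design} forces exactly $12$ points (hence $24$ per $N$-orbit), and Theorem \ref{thm:uniqueness} finishes. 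You replace all of this by the free right action of the Hurwitz unit group $U$ on each shell (freeness from invertibility in $\h$), which gives orbits of size exactly $24$ with no appeal to the LP bound, and which exhibits each normalized orbit explicitly as $L_{\boldsymbol{h}/|\boldsymbol{h}|}(U)$, an isometric copy of the $24$-cell. Your worry about the invariance step $R_u(D_4)=D_4$ is legitimate but can be dispatched even more cleanly than by a generator check: half-integral Hurwitz quaternions have odd norm, so $D_4=\{x\in H\mid N(x)\in 2\Z\}$ (equivalently $D_4=H(1+i)$), and right multiplication by a unit preserves both $H$ and the norm. Note also that once you know $U$ is an orthogonal image of $\frac{1}{\sqrt{2}}\mathbf{D}_4$ (concretely, right multiplication by the unit quaternion $\frac{1+i}{\sqrt{2}}$ carries $U$ onto $\frac{1}{\sqrt{2}}\mathbf{D}_4$, which is the paper's rotation $R$ in disguise), your invocation of Theorem \ref{thm:uniqueness} is redundant: the required $\rho_{\boldsymbol{h}}$ can be written down directly as a composition of left and right quaternion multiplications, so your route actually bypasses the uniqueness theorem altogether. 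What each approach buys: the paper's proof showcases the uniqueness theorem as a genuine application (the stated purpose of its section), whereas yours is more elementary and explicit, and yields the extra information --- recorded in the paper's remark --- that $S_m$ may be taken to be a set of right-coset representatives of $H^\times$ in $(D_4)_{2m}$, with $|S_m|=\sum_{d\mid 2m,\, d\ \mathrm{odd}} d$.
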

\begin{proof}
Since the Weyl group $W(\mathbf{F}_4)$ acts on each shells of the $D_4$ lattice, we have a $W(\mathbf{F}_4)$-orbit decomposition of $(D_4)_{2m}$.
Thus, it suffices to show that each orbit $\boldsymbol{x}^{W(\mathbf{F}_4)}$ of $\boldsymbol{x}\in \frac{1}{\sqrt{2m}}\big( D_4\big)_{2m}$ is a disjoint union of certain orthogonal transformations of $\frac{1}{\sqrt{2}}\mathbf{D}_4$.
For this, using Magma system \cite{BCP97}, one can check that there exists a subgroup $N$ of $W(\mathbf{F}_4)$ such that 
\begin{itemize}
  \setlength{\parskip}{0cm} 
  \setlength{\itemsep}{0cm} 
    \item $|N|=24${,}
    \item $-I\in N${,}
    \item the harmonic Molien series of $N$ is given by
    \[ \sum_{\ell \ge0} \dim{\rm Harm}_\ell (\R^4)^{N} t^\ell 
    =\sum_{w \in N}\frac{1-t^2}{{\rm Det}(I-t w)}=1+7t^6+9t^8+26t^{12}+\cdots,\]
    where $I$ is the identity matrix. 
\end{itemize}
Note that every $W(\mathbf{F}_4)$-orbit has an $N$-orbit decomposition.
It follows from the above data and Lemma \ref{lem:inv_harm} that every half set $X$ of the $N$-orbit $\boldsymbol{x}^{N}$ is a spherical $\{10,4,2\}$-design on $\mathbb{S}^3$ with $|X|\le 12$.
In particular, we see from Theorem \ref{thm:upper_bound_design} that $|X|=12$, and hence that $|\boldsymbol{x}^{N}|=24$.
Thus, by Theorem \ref{thm:uniqueness}, the $N$-orbit $\boldsymbol{x}^{N}$ is an orthogonal transformation of the normalized $D_4$ root system $\frac{1}{\sqrt{2}}\mathbf{D}_4$.
{This completes} the proof.
\end{proof}

\begin{rem}
We briefly mention another proof of Theorem \ref{thm:D_4-decomp}, which provides more information about $S_m$.
It uses Hurwitz quaternions (cf.~\cite{CSb}).
Let $H$ be the ring of Hurwitz quaternions 
\[ H=\left\{ x=x_1 + x_2 i + x_3j +x_4k \ \middle| \ x_1,\ldots,x_4\in \Z \ \mbox{or}\ \Z+\frac12\right\}, \]
where $i^2=j^2=-1$ and $ij=-ji=k$.
This forms a $\Z$-lattice and, for $m\in\Z_{\ge0}$, we obtain the $m$-shell $H_m=\{x\in H\mid x_1^2+x_2^2+x_3^2+x_4^2=m\}$.
A key ingredient is the equality $H_{2m}=\big(D_4\big)_{2m}$ for $m\ge0$ (see \cite[Section 5.5]{CSb}), where we identify $\R+\R i+\R j+\R k$ with $\R^4$ via the isomorphism $x_1 + x_2 i + x_3j +x_4k\mapsto (x_1,x_2,x_3,x_4)$.
From this, we see that a natural choice of the subgroup $N$ of $W(\mathbf{F}_4)$ is the one that is isomorphic to the unit group $H^\times=H_1$, consisting of 24 elements $\pm1,\pm i,\pm j,\pm k,\pm\frac12\pm\frac{i}{2} \pm \frac{j}{2}\pm \frac{k}{2}$, since $H^\times$ acts on $\big(D_4\big)_{2m}$ by right multiplication (this gives rise to an $H^\times$-orbit decomposition of $\big(D_4\big)_{2m}$).
Moreover, the set $S_m$ is taken to be a system of representatives for the right cosets of $H^\times$ in $\big(D_4\big)_{2m}$.
Note that by the Jacobi's four-square theorem \eqref{eq:Jacobi} and $|(D_4)_{2m}|=24|S_m|$, we have 
\begin{equation*}
|S_m|=\sum_{\substack{d\mid 2m\\ d:{\rm odd}}}d .
\end{equation*}
\end{rem}

It might be interesting to ask if there is a similar decomposition of shells of other lattices.

\section{Application: the uniqueness of the $D_4$ lattice}

The goal of this section is to give a new proof of the uniqueness of the $D_4$ lattice as an even integral lattice of level 2, which is also another application of the uniqueness of the antipodal spherical $\{10,4,2\}$-design on $\mathbb{S}^3$ with 24 points.
Since the theory of weighted theta functions on a lattice is our key ingredient, we begin with some basic terminologies for lattices and weighted theta functions used in \cite{Ebeling}.

Let $\Lambda\subset \R^d$ be a full-ranked lattice.
The lattice $\Lambda$ is said to be {\itshape integral} (resp. {\itshape even}) if $\Lambda$ is a subset of the dual lattice $\Lambda^\ast: =\{\boldsymbol{y}\in \R^d\mid \langle \boldsymbol{x},\boldsymbol{y}\rangle \in \Z\ \mbox{for all}\ \boldsymbol{x}\in \Lambda\}$ (resp. $\langle \boldsymbol{x},\boldsymbol{x}\rangle \in 2\Z$ for all $\boldsymbol{x}\in \Lambda$).
Let $B$ denote a {$\Z$-basis matrix of $\Lambda$, i.e.} $\Lambda=\{\boldsymbol{m}B\mid \boldsymbol{m}\in \Z^d\}$.
The minimum of all $N\in \N$ with $N \langle \boldsymbol{x},\boldsymbol{x}\rangle \in 2\Z$ for all $\boldsymbol{x}\in \Lambda^\ast$ is called the {\itshape level} of $\Lambda$.

Let $\Lambda$ be an even lattice in $\R^d$ and $\Lambda_{2m}:=\{\boldsymbol{x}\in \Lambda\mid \langle \boldsymbol{x},\boldsymbol{x}\rangle =2m\}$ the $2m$-shell of $\Lambda$.
For $P\in {\rm Harm}_{\ell}(\R^d)$ and $m\ge0$, we write $a_{\Lambda,P}(m):=\sum_{\boldsymbol{x}\in \Lambda_{2m}}P(\boldsymbol{x})$ and define the {\itshape weighted theta function} $\theta_{\Lambda,P}(z)$ by 
\[ \theta_{\Lambda,P}(z) := \sum_{m\ge0} a_{\Lambda,P}(m) q^m \quad (q=e^{2\pi iz} ),\]
which is a holomorphic function on the complex upper half-plane $z\in \mathbb{H}=\{z\in \C \mid {\rm Im} \, z>0\}$.
In particular, if $P=1$ of degree 0, one gets the generating series of the cardinality of each $2m$-shells of $\Lambda$.
Namely, $\theta_{\Lambda,1}(z) = \sum_{m\ge0} | \Lambda_{2m}| q^m$.
 
By Hecke and Schoenberg, for an even integral lattice $\Lambda$ of level $N$ in $\R^d$, the function $\theta_{\Lambda,P}(z)$ is known to be a modular form of weight $d/2+\ell$ {on} $\Gamma_1(N)$ (see e.g., \cite[Chap.3]{Ebeling}), where {$\Gamma_1(N):=\{\gamma\in {\rm SL}_2(\Z) \mid \gamma \equiv (\begin{smallmatrix}1&\ast \\ 0&1\end{smallmatrix})\bmod N\}$ is a congruence subgroup of level $N$ of} ${\rm SL}_2(\Z) $.
Let $M_{k}(\Gamma_1(N))$ denote the $\C$-vector space of modular forms of weight $k$ {on} $\Gamma_1(N)$.
Then we have the $\C$-linear map
\[ \vartheta_{\Lambda,\ell} : {\rm Harm}_{\ell}(\R^d)\otimes_{\R} \C \longrightarrow M_{d/2+\ell}(\Gamma_1(N)) ,\qquad P\longmapsto \theta_{\Lambda,P}(z),\]
where ${\rm Harm}_{\ell}(\R^d)\otimes_{\R} \C$ is the $\C$-vector space spanned by real harmonic polynomials.
When $\ell\ge1$, the image 
\begin{equation*}
 {\rm Im}\, \vartheta_{\Lambda,\ell}= \langle \theta_{\Lambda,P}(z) \mid P\in {\rm Harm}_{\ell}(\R^d)\rangle_\C 
\end{equation*}
is a subspace of the $\C$-vector space $S_{d/2+\ell}(\Gamma_1(N))$ of cusp forms of weight $d/2+\ell$ {on} $\Gamma_1(N)$.

Fundamental results on the weighted theta functions for the $D_4$ lattice are summarized as follows.

\begin{prop}\label{prop:weight2}
For $\ell \ge1$, one has that $ {\rm Im}\, \vartheta_{D_4,\ell} \subset S_{2+\ell}(\Gamma_1(2))$.
 When $\ell=0$, we find that $\theta_{D_4,1}(z)=2E_2(2z) -E_2(z)=1+24q+24q^2+96q^3+24q^4+\cdots$,
 where
\[E_2(z):= 1-24 \sum_{m\ge1}\left(\sum_{d\mid m}d\right) q^m=1-24q-72q^2-96q^3-168q^4-144q^5+\cdots .\]
\end{prop}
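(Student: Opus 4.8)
The plan is to prove the two assertions of Proposition \ref{prop:weight2} separately, using the Hecke--Schoenberg theory already recalled above together with explicit dimension data for modular forms on $\Gamma_1(2)$. For the claim that $\mathrm{Im}\,\vartheta_{D_4,\ell}\subset S_{2+\ell}(\Gamma_1(2))$ when $\ell\ge1$, I note that the general machinery stated in the excerpt already gives $\mathrm{Im}\,\vartheta_{D_4,\ell}\subset S_{d/2+\ell}(\Gamma_1(N))$ for any nonzero harmonic polynomial of degree $\ell\ge1$, once we know the dimension $d$ and the level $N$ of $D_4$. Here $d=4$, so the weight is $d/2+\ell=2+\ell$. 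Thus the only thing to verify is that the level of $D_4$ equals $2$. First I would compute the dual lattice $D_4^\ast$ explicitly (it is generated by $D_4$ together with the glue vector $(\tfrac12,\tfrac12,\tfrac12,\tfrac12)$), check that $D_4$ is even, and then determine the smallest $N\in\N$ for which $N\langle\boldsymbol{x},\boldsymbol{x}\rangle\in 2\Z$ holds for every $\boldsymbol{x}\in D_4^\ast$. Since $\langle\boldsymbol{g},\boldsymbol{g}\rangle=1$ for the glue vector $\boldsymbol{g}=(\tfrac12,\tfrac12,\tfrac12,\tfrac12)$, we have $N=2$, as required. Feeding $d=4$ and $N=2$ into the containment from the Hecke--Schoenberg discussion yields the first assertion immediately.

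For the case $\ell=0$, the goal is the explicit identity $\theta_{D_4,1}(z)=2E_2(2z)-E_2(z)$. I would proceed by identifying the relevant space of modular objects. The function $\theta_{D_4,1}(z)=\sum_{m\ge0}|(D_4)_{2m}|q^m$ is, by Hecke--Schoenberg with $P=1$, a modular form of weight $d/2=2$ on $\Gamma_1(2)$ (more precisely on $\Gamma_0(2)$, since the weight is even and the character trivial). The space $M_2(\Gamma_0(2))$ is one-dimensional, spanned by the weight-$2$ Eisenstein series attached to level $2$, which can be written as $2E_2(2z)-E_2(z)$; here the quasimodular correction terms of the two copies of $E_2$ cancel so that the combination is genuinely modular. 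Since both $\theta_{D_4,1}(z)$ and $2E_2(2z)-E_2(z)$ lie in this one-dimensional space and both have constant term $1$ (the empty shell contributes $|(D_4)_0|=1$, and the displayed $q$-expansion of $2E_2(2z)-E_2(z)$ also begins with $1$), they must coincide. Alternatively, and more self-containedly, the identity can be read off directly from Jacobi's four-square formula \eqref{eq:Jacobi}: one checks that the coefficient $24\sum_{d\mid 2m,\,d\ \mathrm{odd}}d$ of $q^m$ in $\theta_{D_4,1}(z)$ matches the corresponding coefficient of $2E_2(2z)-E_2(z)$ computed from the stated $q$-expansion of $E_2$.

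The main obstacle, such as it is, lies in handling the weight-$2$ subtlety: $E_2(z)$ is only quasimodular, not modular, so one must verify carefully that the particular linear combination $2E_2(2z)-E_2(z)$ restores modularity on $\Gamma_0(2)$. This is where the transformation law of $E_2$ under $z\mapsto -1/z$ (which produces an anomalous $z$-dependent term proportional to $1/z$) must be invoked, and one checks that the anomalous terms from the two summands cancel under the relevant generators of $\Gamma_0(2)$. Everything else reduces to a routine comparison of $q$-expansions, which I would dispatch by citing \eqref{eq:Jacobi} and the dimension count $\dim M_2(\Gamma_0(2))=1$; the bulk of the argument is bookkeeping rather than any genuine difficulty.
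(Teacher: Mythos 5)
Your proof is correct and follows essentially the same route as the paper: the first assertion via Hecke--Schoenberg with $d=4$ and level $N=2$, and the second via the one-dimensionality of the weight-$2$ space on level $2$ together with a comparison of constant terms (the paper spans $M_2(\Gamma_1(2))$ by $2E_2(2z)-E_2(z)$ exactly as you do). One small slip worth fixing: $D_4^\ast$ is generated by $\Z^4$ together with the glue vector $\boldsymbol{g}=(\tfrac12,\tfrac12,\tfrac12,\tfrac12)$, not by $D_4$ and $\boldsymbol{g}$ alone --- for instance $(1,0,0,0)\in D_4^\ast$ but $(1,0,0,0)\notin D_4+\Z\boldsymbol{g}$; this does not affect your conclusion, since every vector of the full dual still has integral norm while $\langle\boldsymbol{g},\boldsymbol{g}\rangle=1$ rules out $N=1$, so the level is indeed $2$.
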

\begin{proof}
The $D_4$ lattice is of level 2, so the first statement is a consequence of the classical results by Hecke and Schoenberg.
For the last statement, we note that the space $M_2(\Gamma_1(2))$ is 1-dimensional spanned by $2E_2(2z) -E_2(z)$ ({which is modular, even though the Eisenstein series $E_2(z)$ is not a modular form}).
Since ${\rm Im}\, \vartheta_{D_4,0}\subset M_2(\Gamma_1(2))$, $\theta_{D_4,1}$ is a constant multiple of $2E_2(2z) -E_2(z)$.
Comparing the constant term, we get the desired result.
\end{proof}

Let us prove the uniqueness of the $D_4$ lattice.

\begin{thm}\label{thm:uniqueness_lattice}
For any even integral lattice $\Lambda \subset \R^4$ of level $2$, there exists an orthogonal transformation $\sigma \in O(\R^4)$ such that $\Lambda=\sigma\big(D_4\big) $.
\end{thm}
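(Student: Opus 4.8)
The plan is to deduce the statement from the uniqueness of the tight antipodal spherical $\{10,4,2\}$-design (Theorem \ref{thm:uniqueness}): I would show that the normalized minimal shell $\frac{1}{\sqrt{2}}\Lambda_2$ is such a design, and then recover the full lattice $\Lambda$ from its $2$-shell.

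First I would pin down the cardinalities of all shells. Since $\Lambda$ is an even integral lattice of level $2$ in $\R^4$, the Hecke--Schoenberg theorem gives $\theta_{\Lambda,1}(z)\in M_2(\Gamma_1(2))$, and by Proposition \ref{prop:weight2} this space is one-dimensional, spanned by $2E_2(2z)-E_2(z)=\theta_{D_4,1}(z)$. Comparing constant terms, both of which equal $|\Lambda_0|=|(D_4)_0|=1$, forces $\theta_{\Lambda,1}=\theta_{D_4,1}$, hence $|\Lambda_{2m}|=|(D_4)_{2m}|$ for all $m\ge0$. In particular $|\Lambda_2|=24$, and since this is positive while $\Lambda$ is even, $2$ is the smallest nonzero norm.

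Next I would check that $X:=\frac{1}{\sqrt{2}}\Lambda_2\subset\mathbb{S}^3$ is a tight antipodal spherical $\{10,4,2\}$-design. It is antipodal because $-\Lambda=\Lambda$, and $|X|=24$, so every half set $X'$ has $12$ points. For distinct $\boldsymbol{x},\boldsymbol{y}\in\Lambda_2$ with $\boldsymbol{y}\neq\pm\boldsymbol{x}$, integrality gives $\langle\boldsymbol{x},\boldsymbol{y}\rangle\in\Z$ while the strict Cauchy--Schwarz inequality gives $|\langle\boldsymbol{x},\boldsymbol{y}\rangle|<2$, so $\langle\boldsymbol{x},\boldsymbol{y}\rangle\in\{-1,0,1\}$ and therefore $A(X')\subset\{-\frac12,0,\frac12\}$. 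Thus $X'$ is a $(4,12,1/2)$ spherical code with $A(X')\subset[-\frac12,\frac12]$ attaining the upper bound of Theorem \ref{thm:lower_bound_code}; hence $X'$ is a spherical $\{10,4,2\}$-design, and by Lemma \ref{lem:half set of antipodal T-design} so is its antipodal completion $X$. This route bypasses the weight $4$, $6$ and $12$ cusp-form spaces entirely: the linear programming bound delivers the full harmonic strength $\{10,4,2\}$ at once.

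Finally I would recover the lattice. By Theorem \ref{thm:uniqueness} there is $\sigma\in O(\R^4)$ with $X=\sigma(\frac{1}{\sqrt{2}}\mathbf{D}_4)$, i.e.\ $\Lambda_2=\sigma(\mathbf{D}_4)$. Since $\mathbf{D}_4$ generates $D_4$, applying $\sigma^{-1}$ gives $D_4=\langle\mathbf{D}_4\rangle_{\Z}\subset\sigma^{-1}(\Lambda)$. The lattice $\sigma^{-1}(\Lambda)$ is again even and, as $\sigma^{-1}$ preserves norms, has the same shell counts as $\Lambda$, hence as $D_4$ by the first step; the containments $(D_4)_{2m}\subset(\sigma^{-1}(\Lambda))_{2m}$ are thus equalities for every $m$. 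Since every vector of the even lattice $\sigma^{-1}(\Lambda)$ lies in some shell, I conclude $\sigma^{-1}(\Lambda)=D_4$, that is $\Lambda=\sigma(D_4)$. I expect the delicate point to be exactly this last step: the uniqueness theorem only controls the minimal vectors, so one cannot directly claim that $\Lambda_2$ generates $\Lambda$; instead the argument leans on the exact equality of all shell cardinalities from the modular-form computation, together with the evenness of $\sigma^{-1}(\Lambda)$, to squeeze $\sigma^{-1}(\Lambda)$ between $D_4$ and $D_4$.
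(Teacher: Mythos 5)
Your proposal is correct and follows essentially the same route as the paper's proof: the one-dimensionality of $M_2(\Gamma_1(2))$ forces $\theta_{\Lambda,1}=\theta_{D_4,1}$ and hence $|\Lambda_{2m}|=|(D_4)_{2m}|$ for all $m$, integrality plus Cauchy--Schwarz pins down the inner products of $\frac{1}{\sqrt{2}}\Lambda_2$, the linear programming equality case upgrades a half set to a spherical $\{10,4,2\}$-design so that Theorem \ref{thm:uniqueness} yields $\sigma$, and the sublattice generated by $\Lambda_2$ is squeezed against $\Lambda$ via the equal shell cardinalities. Your only deviations are cosmetic: you invoke the equality condition of the code bound (Theorem \ref{thm:lower_bound_code}) for the design property of the half set, which is in fact the more precise citation than the paper's appeal to Theorem \ref{thm:upper_bound_design}, and you compare $\sigma^{-1}(\Lambda)\supset D_4$ instead of the paper's $\Lambda'=\sigma(D_4)\subset\Lambda$, which is the same squeeze.
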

\begin{proof}
Since ${\rm Im}\, \vartheta_{\Lambda,0}\subset M_2(\Gamma_1(2))=\langle 2E_2(2z) -E_2(z)\rangle_\C$, we have $\theta_{\Lambda,1}(z)=2E_2(2z) -E_2(z)$.
This {together with Proposition \ref{prop:weight2}} implies $|\Lambda_{2m}| = |(D_4)_{2m}|$ for all $m\ge0$.
We first consider the case $\Lambda_2$.
Since $\Lambda$ is integral, using the Cauchy-Schwarz inequality, we see that $\langle \boldsymbol{x},\boldsymbol{y}\rangle \in \{0,\pm1,\pm2\}$ holds for any $\boldsymbol{x},\boldsymbol{y}\in \Lambda_2$.
Hence
\[ A\left(\frac{1}{\sqrt{2}} \Lambda_2\right) \subset \left\{ -1,-\frac12 ,0,\frac12\right\}.\]
Since a half set $X'$ of $\frac{1}{\sqrt{2}}\Lambda_2$ is a $(4,12,1/2)$ spherical code with $A(X')\subset\{-1/2,0,1/2\}$, by Theorem \ref{thm:upper_bound_design} and Lemma \ref{lem:half set of antipodal T-design}, the normalized set $\frac{1}{\sqrt{2}}\Lambda_2$ is an antipodal spherical $\{10,4,2\}$-design on $\mathbb{S}^3$ with 24 points. 
By Theorem \ref{thm:uniqueness}, there exists $\sigma\in O(\R^4)$ such that $\Lambda_2=\sigma\big({\mathbf D}_4\big)$.
Now let us consider the sublattice $\Lambda'$ of $\Lambda$ generated by $\Lambda_2$.
Since the $D_4$ lattice is generated by $\mathbf{D}_4$, we have $\Lambda'= \sigma\big(D_4\big)$.
Noting that the orthogonal transformation $\sigma$ preserves the inner product, we get 
\[ |(D_4)_{2m}|= |\Lambda'_{2m}| \le |\Lambda_{2m}| = |(D_4)_{2m}|\]
for all $m\ge0$.
Thus, $\Lambda'_{2m}=\Lambda_{2m}$ and hence $\Lambda'=\Lambda$, from which the desired result follows.
\end{proof}

It should be noted that Theorem \ref{thm:uniqueness_lattice} can be shown by the same method with the one described in Serre's book \cite[Chap.~V]{Serre}.
In this direction, we shall use ${\rm Aut}(\mathbf{D}_4)=W(\mathbf{F}_4)$ and a version of the Siegel mass formula \cite{Siegel}.

\begin{rem}\label{rem:another proof of the strength}
We briefly mention another proof of Proposition \ref{prop:D4 root}, using weighted theta function $\theta_{D_4,P}(z)$.
For this, we first notice that if $\theta_{D_4,P}(z)=0$ for all $P\in {\rm Harm}_\ell (\R^4)$, then every normalized $2m$-shell $\frac{1}{\sqrt{2m}} (D_4)_{2m}$ is a spherical $\{\ell\}$-design (this criterion was first used by Venkov \cite{Venkov84} in his design theoretical study on even unimodular lattices).
Therefore, it suffices to show that $ {\rm Im}\, \vartheta_{\Lambda,\ell}=0$ for $\ell \in \{10,4,2\}$, but this can be checked by a computer due to the fact that $M_k(\Gamma_1(2))$ is a finite dimensional vector space over $\C$ so that these modular forms are determined by first several Fourier coefficients (actually, we also need a list of harmonic polynomials of these degrees and the simple expression of the $2m$-shell of $D_4$).
Alternatively, the result would follow from the dimension formula for the space $S_k^{\rm new}(\Gamma_1(2))$ of newforms (see \cite{DS}), 
since we may have the equality ${\rm Im}\, \vartheta_{D_4,\ell} = S_{2+\ell}^{\rm new}(\Gamma_1(2))$ (this equality is folklore, but well known for the experts; consult \cite{BochererNebe,Eichler,HijikataSaito} for relevant materials).
\end{rem}

Combining the uniqueness of level 2 lattices (Theorem \ref{thm:uniqueness_lattice}) and Waldspurger's result \cite[Th\'{e}or\`{e}m 2']{Waldspurger79}, we can at least make sure that the inclusion
${\rm Im}\, \vartheta_{D_4,\ell} \supset S_{2+\ell}^{\rm new}(\Gamma_1(2))$ holds for any $\ell\ge1$.
The first example of newforms {on} $\Gamma_1( 2)$ exists in weight 8 of the form
\[ \eta(z)^{8}\eta(2z)^8=q-8q^2+12q^3+64q^4-210q^5+\cdots,\] 
where $\eta(z)=q^{1/24}\prod_{n\ge1}(1-q^n)$ is the Dedekind eta function.
The above inclusion implies that there exists a harmonic polynomial $P\in {\rm Harm}_6(\R^4)$ such that $\theta_{D_4,P}(z)= \eta(z)^{8}\eta(2z)^8$.
We will give applications of this expression in the next section.


\section{Strength of spherical design}

In this section, we first prove Theorem \ref{thm:non-vanishing}, and then discuss the non-vanishing problem on the Fourier coefficients of the cusp form $\eta(z)^{8}\eta(2z)^8$.
For a finite set $X\subset \mathbb{S}^{d-1}$, we say that $T\subset \N$ is the harmonic strength of $X$ if $X$ is not a spherical $T'$-design for any $T\subsetneq T'\subset \N$.
We wish to determine the harmonic strength of the $2m$-shell of the $D_4$ lattice.

\begin{thm}\label{thm:lehmer-type}
For $m\ge1$, the harmonic strength of $\frac{1}{\sqrt{2m}}\big(D_4\big)_{2m}$ contains $6$ if and only if $\tau_2(m)=0$, where $\sum_{m\ge1}\tau_2(m)q^m=\eta(z)^{8}\eta(2z)^8$.
\end{thm}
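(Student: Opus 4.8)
The plan is to connect the harmonic strength of $\frac{1}{\sqrt{2m}}(D_4)_{2m}$ at degree $6$ directly to the vanishing of the relevant weighted theta function, and then identify that theta function with the newform $\eta(z)^8\eta(2z)^8$. The starting point is the criterion recalled in Remark \ref{rem:another proof of the strength}: the normalized shell $\frac{1}{\sqrt{2m}}(D_4)_{2m}$ is a spherical $\{6\}$-design if and only if $\theta_{D_4,P}(z)$ has vanishing $m$th Fourier coefficient for every $P\in{\rm Harm}_6(\R^4)$, since $a_{D_4,P}(m)=\sum_{\boldsymbol{x}\in(D_4)_{2m}}P(\boldsymbol{x})$ is, up to the normalization factor $(2m)^{3}$ (recall $P$ is homogeneous of degree $6$), exactly the design sum $\sum_{\boldsymbol{x}\in\frac{1}{\sqrt{2m}}(D_4)_{2m}}P(\boldsymbol{x})$. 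Thus the harmonic strength contains $6$ precisely when $a_{D_4,P}(m)=0$ for all $P\in{\rm Harm}_6(\R^4)$.

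The second ingredient is the structure of the image ${\rm Im}\,\vartheta_{D_4,6}$. From the discussion following Theorem \ref{thm:uniqueness_lattice}, we have ${\rm Im}\,\vartheta_{D_4,6}=S_8^{\rm new}(\Gamma_1(2))$, and weight $8$ is the first weight in which a newform on $\Gamma_1(2)$ appears, the newform being $\eta(z)^8\eta(2z)^8$. The key point I would establish is that this space is one-dimensional, so that there exists a single $P_0\in{\rm Harm}_6(\R^4)$ (up to scalar) with $\theta_{D_4,P_0}(z)=\eta(z)^8\eta(2z)^8=\sum_{m\ge1}\tau_2(m)q^m$, and moreover $\theta_{D_4,P}(z)$ for every $P\in{\rm Harm}_6(\R^4)$ is a scalar multiple of this newform. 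Granting the one-dimensionality, the equivalence is immediate: the condition ``$a_{D_4,P}(m)=0$ for all $P$'' collapses to the single condition $\tau_2(m)=0$, because every weighted theta function in degree $6$ is proportional to $\sum_m\tau_2(m)q^m$, and a nonzero proportionality constant cannot make a nonzero $\tau_2(m)$ vanish.

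Concretely I would carry out the argument in the following order. First, invoke the criterion to reduce the design condition at degree $6$ to the vanishing of all $m$th coefficients $a_{D_4,P}(m)$. Second, use the identification ${\rm Im}\,\vartheta_{D_4,6}=S_8^{\rm new}(\Gamma_1(2))$ together with the dimension formula for $S_8^{\rm new}(\Gamma_1(2))$ to show this image is one-dimensional, spanned by $\eta(z)^8\eta(2z)^8$. Third, fix a generator $P_0$ so that $\theta_{D_4,P_0}=\sum_m\tau_2(m)q^m$ and observe that every $P$ gives $\theta_{D_4,P}=c_P\cdot\theta_{D_4,P_0}$ for some $c_P\in\C$, whence $a_{D_4,P}(m)=c_P\,\tau_2(m)$. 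Fourth, conclude that $a_{D_4,P}(m)=0$ for all $P$ if and only if $\tau_2(m)=0$, which is the asserted equivalence.

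The main obstacle, and the step I would treat most carefully, is verifying that ${\rm Im}\,\vartheta_{D_4,6}$ is genuinely one-dimensional rather than merely contained in $S_8(\Gamma_1(2))$. The containment ${\rm Im}\,\vartheta_{D_4,6}\subset S_8(\Gamma_1(2))$ is given by Proposition \ref{prop:weight2}, but $S_8(\Gamma_1(2))$ could a priori be larger than the newform line (it may contain oldforms coming from level $1$). The resolution is the equality ${\rm Im}\,\vartheta_{D_4,6}=S_8^{\rm new}(\Gamma_1(2))$ flagged as folklore in Remark \ref{rem:another proof of the strength}, combined with $\dim S_8^{\rm new}(\Gamma_1(2))=1$; alternatively, one can sidestep the general theory by directly checking, via the explicit list of degree-$6$ harmonic polynomials and the Jacobi four-square description of $(D_4)_{2m}$, that all the weighted theta functions $\theta_{D_4,P}$ are scalar multiples of $\eta(z)^8\eta(2z)^8$, a finite computation justified because $M_8(\Gamma_1(2))$ is finite-dimensional and its elements are determined by finitely many Fourier coefficients. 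Either route secures the proportionality $a_{D_4,P}(m)=c_P\,\tau_2(m)$ on which the whole equivalence rests.
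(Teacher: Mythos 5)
Your proposal is correct in outline, but it takes a genuinely different route from the paper. You work with the full $49$-dimensional space ${\rm Harm}_6(\R^4)$ and identify ${\rm Im}\,\vartheta_{D_4,6}$ with the line $\C\cdot\eta(z)^8\eta(2z)^8$, so that $a_{D_4,P}(m)=c_P\,\tau_2(m)$ for every $P$; the two nontrivial inputs are then $\dim S_8(\Gamma_1(2))=1$ and the non-vanishing ${\rm Im}\,\vartheta_{D_4,6}\neq 0$. Note that your oldform worry dissolves more easily than you suggest: since $S_8({\rm SL}_2(\Z))=0$, there are no oldforms in weight $8$ and $S_8(\Gamma_1(2))=S_8^{\rm new}(\Gamma_1(2))$ is one-dimensional, so the containment of Proposition \ref{prop:weight2} already forces proportionality. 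What is \emph{not} free is the non-vanishing of the image: the equality ${\rm Im}\,\vartheta_{D_4,6}=S_8^{\rm new}(\Gamma_1(2))$ that you cite is only flagged as folklore in Remark \ref{rem:another proof of the strength}; the paper rigorously proves just the inclusion ${\rm Im}\,\vartheta_{D_4,\ell}\supset S_{2+\ell}^{\rm new}(\Gamma_1(2))$ via Waldspurger's theorem combined with Theorem \ref{thm:uniqueness_lattice}, and that inclusion is exactly what you should invoke to get a $P_0$ with $c_{P_0}\neq 0$ (your alternative, a direct check over a basis of ${\rm Harm}_6(\R^4)$, also works but is a sizeable computation). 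The paper's actual proof of Theorem \ref{thm:lehmer-type} avoids both Waldspurger and the large computation by a representation-theoretic reduction you do not use: since the shells are $W(\mathbf{F}_4)$-invariant, the theta map kills the complement $\{(1-\sigma^\ast)P\}$, so ${\rm Im}\,\vartheta_{D_4,6}$ is spanned by $\theta_{D_4,P_6}$ for the single invariant polynomial $P_6$ of \eqref{eq:pol_6}, whose existence and uniqueness up to scalar come from the harmonic Molien series \eqref{eq:har_molien}; the explicit identity $\theta_{D_4,P_6}(z)=-192\,\eta(z)^8\eta(2z)^8$ in \eqref{eq:theta_eta} then settles both proportionality and non-vanishing at once, and moreover hands the authors the concrete polynomial and constant that they reuse in the congruence proof of Theorem \ref{thm:tau2_congruence}. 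In short: your route buys generality (it would apply whenever the target cusp form space is one-dimensional, without knowing the invariant theory of the automorphism group), while the paper's route buys self-containedness, a one-polynomial computation, and explicit data needed later.
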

\begin{proof}
We first notice that by the representation theory, we have 
\begin{equation*}
{\rm Harm}_\ell (\R^4)=  {\rm Harm}_\ell (\R^4)^{W(\mathbf{F}_4)}\oplus  \{(1-\sigma^\ast)P\mid P\in {\rm Harm}_\ell (\R^4), \sigma\in W(\mathbf{F}_4)\}.
\end{equation*}
{For all $P\in {\rm Harm}_\ell (\R^4)$ and $\sigma\in W(\mathbf{F}_4)$, since the subgroup $W(\mathbf{F}_4)$ of $O(\R^4)$ acts on $D_4$, we have 
\[ \theta_{D_4,\sigma^\ast P}(z)=\sum_{\boldsymbol{x}\in D_4} P(\sigma(\boldsymbol{x}))q^{\frac{\langle \boldsymbol{x},\boldsymbol{x}\rangle}{2}} = \sum_{\boldsymbol{x}\in \sigma(D_4)} P(\boldsymbol{x})q^{\frac{\langle \boldsymbol{x},\boldsymbol{x}\rangle}{2} }= \theta_{\sigma(D_4),P}(z)=\theta_{D_4,P}(z).\]
Hence, $ \{(1-\sigma^\ast)P\mid P\in {\rm Harm}_\ell (\R^4), \sigma\in W(\mathbf{F}_4)\}$ is a subspace of ${\rm ker} \, \vartheta_{D_4,\ell}$.
This shows that} ${\rm Im}\, \vartheta_{D_4,\ell}={\rm Im}\, \vartheta_{D_4,\ell}\big|_{{\rm Harm}_\ell (\R^4)^{W(\mathbf{F}_4)}}$.

By \eqref{eq:har_molien}, the space ${\rm Harm}_6(\R^4)^{W(\mathbf{F}_4)}$ is the 1-dimensional subspace of ${\rm Harm}_6(\R^4)$ and its basis is given (see e.g., \cite[Section 5.1]{NozakiSawa}) by 
\begin{equation}\label{eq:pol_6}
\begin{aligned}
P_6(\boldsymbol{x}):=&p_6(x_1,x_2,x_3,x_4) \\
&-5 \big\{ x_1^4p_2(x_2,x_3,x_4)+x_1^2 p_4(x_2,x_3,x_4) +(x_2^4+x_3^2x_4^2) p_2(x_3,x_4) + x_2^2 p_4(x_3,x_4)\}\\
&+30\{x_1^2(x_2^2x_3^2+x_2^2x_4^2+x_3^2x_4^2)+x_2^2x_3^2x_4^2\},
\end{aligned}
\end{equation}
where $p_k(x_1,\ldots,x_d)=x_1^k+\cdots+x_d^k$.
From the above argument, we see that $\frac{1}{\sqrt{2m}}\big(D_4\big)_{2m}$ is a $\{6\}$-design if and only if $\sum_{\boldsymbol{x}\in (D_4)_{2m}} P_6(\boldsymbol{x})=0$.
Then, the result follows from the easily checked identity
\begin{equation}\label{eq:theta_eta}
\theta_{D_4,P_6}(z)=-192 \eta(z)^8\eta(2z)^8,
\end{equation}
where again, we have used the fact that the modular forms are determined by first several Fourier coefficients. 
\end{proof}

We remark that Theorem \ref{thm:lehmer-type} is an analogue to the one given by de la Harpe, Pache and Venkov \cite{HarpePache05,HarpePacheVenkov06}; They observed that the normalized $2m$-shell of the $E_8$ lattice is an antipodal spherical $8$-design if and only if $\tau(m)=0$, where $\tau(m)$ is the $m$th Fourier coefficient of the discriminant function $\Delta(z)=\eta(z)^{24}=\sum_{m\ge0}\tau(m)q^m\in S_{12}({\rm SL}_2(\Z))$.
The question of whether $\tau(m)\neq0$ holds for all $m\ge1$, posed by Lehmer \cite{Lehmer47}, is still far from being solved, so it is a common understanding that determining the (harmonic) strength for all shells of a given lattice is a hard problem.
A similar attempt for other lattices can be found in \cite{BM10,Pache05}.
In particular, Miezaki \cite{Miezaki13} obtained the harmonic strength for any shells of the square lattice $\Z^2$.
His result is extended by Pandey \cite{Pandey} to the rings of integers of imaginary quadratic fields over $\Q$ with class number 1.

Using Pari-GP \cite{PARI}, we have checked that $\tau_2(m)$ is non-zero up to $m\le 10^{8}$.
One would expect that the harmonic strength of the $2m$-shell of $D_4$ is given by $\{10,4,2\}$ for all $m\ge1$.
To give partial evidence, we consider the congruences of $\tau_2(m)$.

\begin{thm}\label{thm:tau2_congruence}
Let $\ell \in \{3,5\}$.
For any prime $p\ge3$, we have that
\[\tau_2(p) \equiv p(p+1) \mod \ell.\]
\end{thm}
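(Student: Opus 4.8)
The plan is to recognize the target value $p(p+1)=p+p^2$ as a Hecke eigenvalue produced from the weight-two Eisenstein series $\phi:=\theta_{D_4,1}=2E_2(2z)-E_2(z)$. Since $|(D_4)_{2m}|=24\sum_{d\mid 2m,\,d\ \mathrm{odd}}d$, the form $\phi=1+24\sum_{m\ge1}\big(\sum_{d\mid 2m,\,d\ \mathrm{odd}}d\big)q^m$ is a Hecke eigenform with eigenvalue $1+p$ at every odd prime $p$. Applying the Ramanujan operator $D:=q\frac{d}{dq}$ (which is distinct from the theta maps $\theta_{\Lambda,P}$ used above) twists this eigenvalue by $p$, so the normalized series $\tfrac1{24}D\phi=\sum_{m\ge1}m\big(\sum_{d\mid2m,\,d\ \mathrm{odd}}d\big)q^m$ has $p$-th coefficient $p(1+p)=p(p+1)$ at odd primes. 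In Galois-theoretic language this amounts to $\overline{\rho}_{f,\ell}\cong\omega\oplus\omega^2$ with $\omega$ the mod-$\ell$ cyclotomic character and $f=\eta(z)^8\eta(2z)^8$; comparing determinants forces $\omega^{7}=\omega^{1+2}=\omega^{3}$, hence $\omega^{4}=1$, i.e. $(\ell-1)\mid 4$. This is precisely why the statement is limited to $\ell\in\{3,5\}$, and it organizes the two cases below.

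For $\ell=5$ the weights match and the argument is clean. Reducing mod $5$, the series $\phi$ is a nontrivial weight-two form, and by the theory of Serre and Swinnerton-Dyer the operator $D$ sends a mod-$\ell$ modular form of weight $k$ to one of weight $k+\ell+1$; here $k+\ell+1=2+6=8$, and $24$ is invertible mod $5$, so $\tfrac1{24}D\phi$ is a genuine mod-$5$ modular form of weight $8$ on $\Gamma_0(2)=\Gamma_1(2)$, as is $f$. By Sturm's bound a congruence between two such forms follows from checking coefficients up to $\lfloor 8\cdot[\mathrm{SL}_2(\Z):\Gamma_0(2)]/12\rfloor=\lfloor 8\cdot3/12\rfloor=2$. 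Since the coefficients of $q$ and $q^2$ are $1$ and $-8\equiv2$ for $f$, and $1$ and $2$ for $\tfrac1{24}D\phi$, I would conclude $f\equiv\tfrac1{24}D\phi\pmod 5$ and read off $\tau_2(p)\equiv p(p+1)\pmod5$ for all odd $p$ (the case $p=5$ being $\equiv0$ on both sides).

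The case $\ell=3$ is the main obstacle, and it is where the construction degenerates: every nonconstant coefficient of $\phi$ is divisible by $24$, hence by $3$, so $\phi\equiv1\pmod3$ and $D\phi\equiv0\pmod3$, carrying no mod-$3$ information. Accordingly there is no full $q$-expansion congruence between $f$ and $\tfrac1{24}D\phi$ modulo $3$; they already disagree at the coefficient of $q^2$ (here $-8\not\equiv2\pmod3$), reflecting that the two eigensystems agree only away from the level prime $2$. Because the theorem concerns odd primes only, the plan is to isolate the prime-to-$2$ Hecke eigensystem by passing to the twist by the trivial character modulo $2$, that is, to the odd parts $\sum_{n\ \mathrm{odd}}\tau_2(n)q^n$ and $\sum_{n\ \mathrm{odd}}n\big(\sum_{d\mid n}d\big)q^n$, which live in weight $8$ on $\Gamma_0(4)$. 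There the weight gap created by $D$ is bridged by multiplying by the Hasse-type form $\phi\equiv1\pmod3$, so that both odd parts are realized as mod-$3$ modular forms of weight $8$ on $\Gamma_0(4)$ and a finite Sturm-type verification on $\Gamma_0(4)$ (bound $\lfloor 8\cdot6/12\rfloor=4$, hence the odd indices $n=1,3$) establishes their congruence; matching the resulting prime-to-$2$ eigensystem with $p(p+1)$ yields $\tau_2(p)\equiv p(p+1)\pmod3$ for all odd $p$, with $p=3$ the degenerate case $\tau_2(3)=12\equiv0$. The delicate point throughout is exactly the interaction between the vanishing $D\phi\equiv0$ and the bad prime $2$: one must argue at the level of Hecke eigenvalues away from $2$ rather than through a single $q$-expansion identity, and making this prime-to-$2$ comparison rigorous is the heart of the difficulty.
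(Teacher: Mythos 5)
Your proposal is correct in substance, but it proves the theorem by a genuinely different route from the paper. The paper never leaves its design-theoretic framework: it takes the explicit $W(\mathbf{F}_4)$-invariant harmonic polynomial $P_6$ of \eqref{eq:pol_6}, observes via Fermat's little theorem that $P_6(\boldsymbol{x})\equiv(x_1^2+\cdots+x_4^2)^2\bmod 3$ and $P_6(\boldsymbol{x})\equiv x_1^2+\cdots+x_4^2\bmod 5$, so that $P_6$ is \emph{constant} on the shell $(D_4)_{2p}$ modulo $\ell$, and then combines the identity $\theta_{D_4,P_6}=-192\,\eta(z)^8\eta(2z)^8$ of \eqref{eq:theta_eta} with Jacobi's count $|(D_4)_{2p}|=24(1+p)$; for $\ell=3$, where $-192\equiv 0$, the paper first divides the identity by $3$, using that $3\mid 24(1+p)$. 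That argument is elementary and self-contained, and is exactly the ``new proof'' the paper advertises. Yours is instead the classical Ramanujan--Serre--Swinnerton-Dyer route through congruences of $q$-expansions, i.e.\ precisely the literature-style approach the paper contrasts itself with (citing Billerey--Menares, Dummigan--Fretwell, etc.). What your route buys: a stronger, coefficientwise statement ($\tau_2(m)\equiv m\sum_{d\mid 2m,\,d\ \mathrm{odd}}d\bmod 5$ for \emph{all} $m$, and the analogous congruence mod $3$ for all odd $m$), plus the determinant check $\omega^{7}=\omega^{3}$, hence $(\ell-1)\mid 4$, which conceptually explains why only $\ell\in\{3,5\}$ occur. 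What it costs: reliance on Katz/Serre--Swinnerton-Dyer theory of the theta operator at level prime to $\ell$ and on Sturm bounds, where the paper needs only a polynomial identity and a divisor sum. Your mod-$5$ computation is complete and correct ($\Gamma_0(2)=\Gamma_1(2)$, Sturm bound $2$, coefficients $1,-8$ versus $1,2$ with $-8\equiv 2\bmod 5$).

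For $\ell=3$ your sketch is right but leaves the two level-$4$ realizations implicit; both gaps close easily and you should record them. Since $f=\eta(z)^8\eta(2z)^8$ is a newform of level $2$ with $f|U_2=-8f$, the odd part is the honest cusp form $f(z)+8f(2z)\in S_8(\Gamma_0(4))$ --- no general twisting theory is needed (that would a priori only give level $16$, forcing a Sturm bound of $16$ rather than your claimed $4$). On the Eisenstein side, since the odd-divisor sum depends only on the odd part of its argument, one has
\[
\sum_{n\ \mathrm{odd}}\sigma(n)q^n=\frac{1}{24}\bigl(\phi(z)-\phi(2z)\bigr)\in M_2(\Gamma_0(4)),
\]
and applying $D$ to this weight-$2$ form lands in filtration at most $6$ modulo $3$, after which multiplication by $\phi\equiv 1\bmod 3$ reaches weight $8$, as you intended. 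The Sturm comparison then requires agreement of all coefficients $n\le 4$, not only odd ones, but the even coefficients vanish identically on both sides by construction, so your check at $n=1,3$ (values $1$ and $12\equiv 0$ on both sides) does suffice. With these two identities supplied, your proof is complete, and it stands as a legitimate alternative to the paper's shorter combinatorial argument.
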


\begin{proof}
We use the harmonic polynomial $P_6$ defined in \eqref{eq:pol_6}.
For the case $\ell=3$, 
using $x^4\equiv x^2 \bmod 3$ for all $x\in \Z$, we get
\begin{align*} P_6(\boldsymbol{x})&\equiv x_1^4+x_2^4+x_3^4+x_4^4 + x_1^2p_2(x_2,x_3,x_4)+x_1^2 p_2(x_2,x_3,x_4) \\
&+(x_2^2+x_3^2x_4^2) p_2(x_3,x_4) + x_2^2 p_2(x_3,x_4)\\
&\equiv (x_1^2+\cdots+x_4^2)^2 \mod 3.
\end{align*}
This shows that $P_6(\boldsymbol{x})\equiv (2p)^2\bmod 3$ for all $\boldsymbol{x}\in (D_4)_{2p}$.
Since $|(D_4)_{2p}|=24(1+p)$ (see \eqref{eq:Jacobi}) is divisible by 3, from \eqref{eq:theta_eta} one obtains
\[ -64\tau_2(p) =\frac{1}{3}\sum_{\boldsymbol{x}\in (D_4)_{2p}} P_6(\boldsymbol{x}) \equiv \frac{1}{3}(2p)^2|(D_4)_{2p}|=32p^2 (1+p)\mod 3, \]
from which the case $\ell=3$ follows.
For the case $\ell=5$, notice that $x^6\equiv x^2\bmod 5$ holds for any $x\in \Z$.
We get
\[ P_6(\boldsymbol{x})\equiv x_1^2+x_2^2+x_3^2+x_4^2\mod 5,\]
and hence,
\[-192\tau_2(p) =\sum_{\boldsymbol{x}\in (D_4)_{2p}} P_6(\boldsymbol{x}) \equiv 2p|(D_4)_{2p}|=48p(1+p)\mod 5. \]
So we are done.
\end{proof}

{
\begin{cor}\label{cor:non-vanishing}
For any prime $p\not\equiv -1 \bmod 15$, we have that $\tau_2(p)\neq0$.
\end{cor}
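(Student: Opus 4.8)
The plan is to combine the two congruences from Theorem~\ref{thm:tau2_congruence} with a concrete case analysis on the residue of $p$ modulo $15$, showing that for every prime $p\not\equiv -1\bmod 15$ at least one of the moduli $\ell\in\{3,5\}$ forces $\tau_2(p)\not\equiv 0$. First I would record the two implications explicitly: Theorem~\ref{thm:tau2_congruence} gives $\tau_2(p)\equiv p(p+1)\bmod 3$ and $\tau_2(p)\equiv p(p+1)\bmod 5$. The key observation is that $p(p+1)\equiv 0\bmod \ell$ precisely when $p\equiv 0$ or $p\equiv -1\bmod\ell$; since $p\ge 3$ is prime and $\ell\in\{3,5\}$, the case $p\equiv 0\bmod\ell$ only occurs for $p=\ell$ itself, which can be checked separately (and one verifies $\tau_2(3)=12\ne 0$, $\tau_2(5)=-210\ne 0$ directly). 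So the substantive content is the congruence $p\equiv -1\bmod\ell$.

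\begin{proof}
By Theorem~\ref{thm:tau2_congruence}, for any prime $p\ge3$ we have the two congruences
\begin{equation}\label{eq:two_cong}
\tau_2(p)\equiv p(p+1)\bmod 3\quad\text{and}\quad \tau_2(p)\equiv p(p+1)\bmod 5.
\end{equation}
If $\tau_2(p)=0$, then both right-hand sides of \eqref{eq:two_cong} vanish, so $p(p+1)\equiv 0\bmod 3$ and $p(p+1)\equiv 0\bmod 5$. Since $p$ is prime and $p\ge 3$, neither $3$ nor $5$ divides $p$ unless $p\in\{3,5\}$; a direct check gives $\tau_2(3)=12\neq0$ and $\tau_2(5)=-210\neq0$, so we may assume $p\neq 3,5$. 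Then $3\nmid p$ and $5\nmid p$, whence $p(p+1)\equiv 0\bmod 3$ forces $p\equiv -1\bmod 3$ and $p(p+1)\equiv 0\bmod 5$ forces $p\equiv -1\bmod 5$. By the Chinese remainder theorem these together give $p\equiv -1\bmod 15$. Contrapositively, if $p\not\equiv -1\bmod 15$ then $\tau_2(p)\neq0$.
\end{proof}

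I expect the proof itself to be entirely routine; the only genuine checks are the elementary number theory (ruling out $p\equiv 0$ modulo $3$ or $5$ via primality) and the two small base cases $p=3,5$, whose nonvanishing follows from the explicit Fourier expansion $\eta(z)^8\eta(2z)^8=q-8q^2+12q^3+64q^4-210q^5+\cdots$ already recorded in the excerpt. The main conceptual obstacle is not in the corollary at all but upstream, in securing Theorem~\ref{thm:tau2_congruence}; granting that, the deduction here is a clean application of the Chinese remainder theorem, and the residue class $-1\bmod 15$ arises exactly as the simultaneous solution of $p\equiv-1\bmod 3$ and $p\equiv-1\bmod 5$.
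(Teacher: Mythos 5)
Your proof is correct and follows exactly the route the paper intends: the paper's proof of the corollary is the one-line remark that it is immediate from Theorem~\ref{thm:tau2_congruence}, and your argument simply fills in the contrapositive-plus-CRT deduction explicitly. Your separate verification of the base cases is in fact a necessary piece of care the paper leaves implicit --- notably $p=5$, where $p(p+1)=30\equiv 0 \bmod 15$ makes both congruences vacuous, so the nonvanishing $\tau_2(5)=-210\neq 0$ really does require the direct check from the Fourier expansion (and one could add $p=2$, with $\tau_2(2)=-8\neq0$, since the theorem only covers $p\ge 3$).
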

\begin{proof}
This is immediate from Theorem \ref{thm:tau2_congruence}.
\end{proof}
}

Apart from non-vanishing of the $\tau_2$-function, we should mention that similar congruences to {Theorem \ref{thm:tau2_congruence}} have been established by many people since the time of Ramanujan (see e.g., \cite{BillereyMenares,DummiganFretwell,GabaPopa,KumarKumariMoreeSingh,Nikdelan}).
Our congruences could be a special case of them, but our proof is new.

\begin{rem}
In much the same way as \cite[Theorem 2]{Lehmer47}, we can prove the following statement: {The least $m_0$ such that $\tau_2(m_0)=0$, if exists, it will be an odd prime}.
Deligne's bound $|\tau_2(p)|\le 2p^{\frac{7}{2}}$ (see \cite[Theorem 8.2]{Deligne}) is one of key ingredients of the proof.
\end{rem}


\section*{Acknowledgments}
This work is partially supported by
JSPS KAKENHI Grant Number 19K03445, 20K03736, 20K14294 and 22K03402, and the Research Institute for Mathematical Sciences,
an International Joint Usage/Research Center located in Kyoto University.
The authors are grateful to Prof.~Eiichi Bannai, 
Prof.~Ken Ono, Prof.~Siegfried B\"{o}cheler, Prof.~Jiacheng Xia and Prof.~Pieter Moree
for valuable discussions, comments, suggestions, and corrections.
The authors also would like to extend their appreciation to the anonymous reviewers for their valuable feedback, which significantly contributed to the clarity and coherence of this paper.



\end{document}